\def\sa{{\mathfrak a}}
   \def\cE{{\mathcal E}}   
      \def\cL{{\mathcal L}}
\def\cV{{\mathcal V}}
\def\cal H{{\mathcal H}}
\def\R{\mathbb{R}}
\def\C{\mathbb{C}}
\def\N{\mathbb{N}}
\def\phi{\varphi}
\def\d{\textup{d}}
\DeclareMathOperator{\Real}{Re}
\DeclareMathOperator{\spann}{span}
\DeclareMathOperator{\diam}{diam}
\renewcommand{\theta}{\vartheta}
\newtheorem{theorem}{Theorem}[section]
\newtheorem*{thm*}{Theorem}
\newtheorem{corollary}[theorem]{Corollary}
\newtheorem{lemma}[theorem]{Lemma}
\theoremstyle{definition}
\newtheorem{definition}[theorem]{Definition}
\newtheorem{example}[theorem]{Example}
\newtheorem{remark}[theorem]{Remark}
\numberwithin{equation}{section}
\title{Laplacians on bipartite metric graphs}
\author[P.~Kurasov]{Pavel Kurasov}
\author[J.~Rohleder]{Jonathan Rohleder}
\address{Matematiska institutionen\\ Stockholms universitet \\
106 91 Stockholm \\
Sweden}
\email{kurasov@math.su.se, jonathan.rohleder@math.su.se}
\begin{document}

\begin{abstract}
We study spectral properties of the standard (also called Kirchhoff) Laplacian and the anti-standard (or anti-Kirchhoff) Laplacian on a finite, compact metric graph. We show that the positive eigenvalues of these two operators coincide whenever the graph is bipartite; this leads to a precise relation between their eigenvalues enumerated with multiplicities and including the possible eigenvalue zero. Several spectral inequalities for, e.g., trees are among the consequences of this. In the second part we study inequalities between standard and Dirichlet eigenvalues in more detail and expose another connection to bipartiteness.
\end{abstract}

\maketitle

\section{Introduction}

Differential operators on metric graphs have attracted considerable attention in recent years. They have turned out to be useful as idealized models for systems on thin, network-like structures as, e.g., quantum wires or thin waveguides; see the recent monographs~\cite{BK13,M14} for more details and an overview on the vast literature. Any such operator is specified by the underlying metric graph, its action on functions on the graph and its vertex conditions. In this paper we consider a finite, compact metric graph $\Gamma$. As for the action of the differential operator on $\Gamma$, we focus on the Laplacian, i.e.\ the second derivative operator on each edge of the graph. The most common vertex conditions for the Laplacian on a metric graph are so-called standard (or continuity--Kirchhoff) conditions that require functions to be continuous at each vertex and to have balanced derivatives, i.e.\ the sum of all outgoing derivatives equals zero at each vertex; see Section~\ref{sec:dual} below for more details. The standard Laplacian $L^{\rm st} (\Gamma)$ provides an important example of a self-adjoint quantum graph.

One of our goals in the present paper is to investigate the relation between the spectra of the standard Laplacian and the anti-standard (or anti-Kirchhoff) Laplacian $L^{\rm a/st} (\Gamma)$ that corresponds to conditions that are formally dual to standard conditions: the derivative is assumed to be continuous and the values of the functions are balanced at each vertex. Some spectral properties of the anti-standard Laplacian were studied recently in~\cite{BL19,BM13,FuKuWi,ZS19}. However, it seemingly has remained unnoticed that there is an intimate and simple relation between the eigenvalues of $L^{\rm st} (\Gamma)$ and $L^{\rm a/st} (\Gamma)$. In fact, as we show in Section~\ref{sec:dual}, the eigenvalues of the two operators, enumerated non-decreasingly and counting multiplicities, satisfy
\begin{align}\label{eq:yeah!}
 \lambda_{k + \beta} \big(L^{\rm a/st} (\Gamma) \big) = \lambda_{k + 1} \big( L^{\rm st} (\Gamma) \big)
\end{align}
for all $k \in \N$ if (and only if) the graph $\Gamma$ is bipartite; here $\beta$ denotes the first Betti number, i.e.\ the number of independent cycles in $\Gamma$. To provide the simplest example, any tree is bipartite and satisfies $\beta = 0$, so that the identity~\eqref{eq:yeah!} holds and simplifies to $\lambda_k (L^{\rm a/st} (\Gamma)) = \lambda_{k + 1} (L^{\rm st} (\Gamma))$ for all $k$. The proof of the equation~\eqref{eq:yeah!} relies on a decomposition of the involved Laplacians into products of momentum operators.

The relation~\eqref{eq:yeah!} has a lot of immediate consequences for the spectra of the anti-standard Laplacian $L^{\rm a/st} (\Gamma)$ on any bipartite graph. However, it can also be used to derive properties of standard Laplacian eigenvalues. We demonstrate this by showing quite directly that on any tree $\Gamma$ one has
\begin{align}\label{eq:jaha}
 \lambda_{k + 1} \big(L^{\rm st} (\Gamma) \big) \leq \lambda_k \big(L^{\rm st, D} (\Gamma) \big)
\end{align}
for all $k \in \N$, where $L^{\rm st, D} (\Gamma)$ is the Laplacian subject to Dirichlet conditions at all vertices of degree one (i.e.\ on the ``natural boundary'' of $\Gamma$) and standard conditions at all other vertices; thus the two operators considered here differ only by their conditions on the boundary, being Neumann in one case and Dirichlet in the other. The inequality~\eqref{eq:jaha} was shown in~\cite{BBW15} by completely different methods; it is the counterpart for trees of an inequality between Neumann and Dirichlet eigenvalues of the Laplacian on a bounded domain in $\R^n$ due to Friedlander~\cite{F91} and Filonov~\cite{F04}. We point out that the inequality~\eqref{eq:jaha} is not true in general on graphs with $\beta \geq 1$.

This leads us to the second part of this paper: in Section~\ref{sec:Friedlander} we deal with variants of the inequality~\eqref{eq:jaha} for graphs that are not necessarily trees and compare the eigenvalues of the standard Laplacian with those of the ``decoupled'' Dirichlet Laplacian $L^{\rm D} (\Gamma)$, i.e.\ the Laplacian on $\Gamma$ subject to Dirichlet vertex conditions at all vertices. This gives rise to another connection to bipartiteness of the graph: in the case that $\Gamma$ is equilateral, i.e.\ all edges have the same length, we show that
\begin{align}\label{eq:soIstEs}
 \lambda_{k + 1} \big( L^{\rm st} (\Gamma) \big) \leq \lambda_k \big(L^{\rm D} (\Gamma) \big)
\end{align}
is valid for all $k \in \N$ if and only if $\Gamma$ is bipartite. This is done by using the relation between the eigenvalues of $L^{\rm st} (\Gamma)$ and those of a corresponding discrete Laplacian. For not necessarily equilateral graphs we provide sufficient conditions for the inequality~\eqref{eq:soIstEs} to hold for all $k$.

\section{Preliminaries}

Let $\Gamma$ be a finite, compact, connected metric graph formed by a finite set $\cE = \{ e_1, \dots, e_E \}$ of non-degenerate edges $e_n = [x_{2n-1}, x_{2n} ]$ 
joined at a
set of vertices $\cV = \{v_1, \dots, v_V \}$, where each vertex is understood as a subset of endpoints and the vertices are disjoint such that $ \{ x_j \}_{j=1}^{2 E} = v_1 \cup v_2 \cup \dots \cup v_M$. An edge is said to be incident to a vertex $v$ if (at least) one of its endpoints belongs to $v$. The degree $\deg v$ of a vertex $v$ in $\Gamma$ is the number of edges incident to $v$; note that loops, i.e.\ edges whose two endpoints belong to the same vertex, count twice. We introduce the natural boundary of $\Gamma$ being the set of vertices of degree one,
\begin{align*}
 \partial \Gamma := \left\{ v \in \cV : \deg v = 1 \right\} \subset \cV.
\end{align*}
The number of independent cycles (the first Betti number) is given by  $\beta := E - V + 1.$ In this paper we will be mainly concerned with graphs which are bipartite, i.e., $\cV = \cV_1 \cup \cV_2$ for disjoint sets $\cV_1, \cV_2$ and each edge has one endpoint in $\cV_1$ and one in $\cV_2$; see Figure~\ref{fig:bipartite}. 
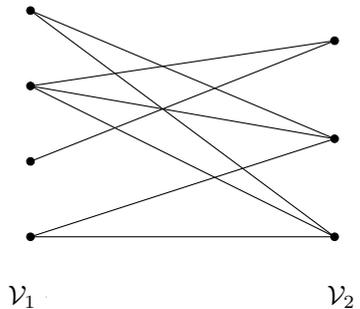
\begin{figure}[htb]

\setlength{\unitlength}{1cm}

\centering

\begin{tikzpicture}
\draw[fill] (2.2,-0.8) circle(0.00) node[left]{$\cV_1$};
\draw[fill] (2,0) circle(0.05);
\draw[fill] (2,1) circle(0.05);
\draw[fill] (2,2) circle(0.05);
\draw[fill] (2,3) circle(0.05);
\draw[fill] (6.4,-0.8) circle(0.00) node[left]{$\cV_2$};
\draw[fill] (6,0) circle(0.05);
\draw[fill] (6,1.3) circle(0.05);
\draw[fill] (6,2.6) circle(0.05);
\draw(2,3)--(6,1.3);
\draw(2,3)--(6,0);
\draw(2,2)--(6,0);
\draw(2,2)--(6,1.3);
\draw(2,2)--(6,2.6);
\draw(2,1)--(6,2.6);
\draw(2,0)--(6,0);
\draw(2,0)--(6,1.3);
\end{tikzpicture}
\caption{A bipartite graph.}
\label{fig:bipartite}

\end{figure}
Recall that a graph $\Gamma$ is bipartite if and only if each cycle in $\Gamma$ contains an even number of edges or, equivalently, if its chromatic number is two.

Our main goal is to study spectral properties of 
Laplacians on $ \Gamma $ in the case where the graph is bipartite. These operators will be self-adjoint in the Hilbert space
\begin{align*}
 L_2 (\Gamma) = \bigoplus_{n = 1}^E L_2 (e_n).
\end{align*}
They will be defined on subspaces of the Sobolev spaces $W^k_2 (\Gamma \setminus \cV)$, $k = 1, 2, \ldots$, consisting of functions in $L^2 (\Gamma)$ whose weak partial derivatives up to the order $k$ exist inside every edge and are square-integrable. The self-adjoint Laplacians under consideration will have purely discrete spectra and we will denote the eigenvalues of any such operator $A$ by
\begin{align*}
 \lambda_1 (A) \leq \lambda_2 (A) \leq \ldots,
\end{align*}
where we count multiplicities.

\section{Relation between standard and anti-standard eigenvalues and its consequences}\label{sec:dual}

\subsection*{Standard and anti-standard Laplacians}

With the Laplace differential expression
\begin{align} \label{diffexp}
 (\cL f) (x) = - f'' (x), \qquad  x \in e_n, \quad n = 1, \dots, E,
\end{align}
on $ \Gamma $ we associate two self-adjoint realisations. To specify their vertex conditions we denote by $ f(x_j) $ and $ \partial f (x_j) $ the limiting values of the function $ f $ and its first derivative (taken in the direction from the vertex into the edge) as the point $ x $ approaches one of the endpoints of the edge, i.e.\
\begin{align*}
 f(x_j)  & = \lim_{x \rightarrow x_j } f(x)
\end{align*}
and
\begin{align*}
  \partial f(x_j) & = \begin{cases}
 f'(x_j), & \text{provided $ x_j $ is the left endpoint},\\
 - f'(x_j), & \text{provided $ x_j $ is the right endpoint}.
\end{cases} 
\end{align*}

\begin{definition}
The {\em standard Laplacian} $ L^{\rm st} (\Gamma) $ (also called Kirchhoff or Neumann Laplacian in the literature) is defined by \eqref{diffexp} on the functions from the Sobolev space $ W_2^2 (\Gamma \setminus \cV) $ satisfying standard vertex conditions at each vertex $ v $, that is
\begin{align*}
\begin{cases}
 f(x_j) = f(x_i) \quad \text{provided} \; x_j, x_i \in v & \text{(continuity condition)}, \\
 \sum_{x_j \in v}  \partial f (x_j) = 0 & \text{(balance condition)}.
\end{cases}
\end{align*}
\end{definition}

\begin{definition}
The {\em anti-standard Laplacian} $ L^{\rm a/st} (\Gamma) $ (also called anti-Kirchhoff Laplacian) is defined by \eqref{diffexp} on
the functions from the Sobolev space $ W_2^2 (\Gamma \setminus \cV) $ satisfying anti-standard vertex
conditions, that is
\begin{align*}
 \begin{cases}
  \sum_{x_j \in v}  f (x_j) = 0 & \text{(balance condition)}, \\
  \partial f(x_j) = \partial f(x_i) \quad \text{provided} \; x_j, x_i \in v & \text{(continuity condition)}.
 \end{cases}
\end{align*}
\end{definition}

We remark that $L^{\rm st} (\Gamma)$ and $L^{\rm a/st} (\Gamma)$ are self-adjoint, non-negative operators in $L^2 (\Gamma)$ with purely discrete spectra. Furthermore, at any vertex of degree one the vertex conditions simplify to a Neumann condition in the case of $L^{\rm st} (\Gamma)$ or a Dirichlet condition for $L^{\rm a/st} (\Gamma)$. We further remark that the operators $L^{\rm st} (\Gamma)$ and $L^{\rm a/st} (\Gamma)$ both are independent of the choice of parametrization of the edges in $\Gamma$, of course assuming that the lengths are preserved.

\subsection*{Momentum operator decomposition of the Laplacians}

Let us introduce the momentum operator $ D $, which as a first order differential operator depends on the orientation of the edges. It was used earlier in \cite{FuKuWi} to derive index theorems for quantum graphs; cf.\ also~\cite{exner}. Our goal is to study bipartite graphs with the set of vertices $ \cV $ being divided into two disjoint sets $\cV_1 $ and $\cV_2 $ such that each edge connects a vertex in $\cV_1$ with a vertex in $\cV_2$. In what follows we shall assume that each edge is oriented pointing from $ \cV_1 $ to $ \cV_2$,
in other words the left endpoint of each edge belongs to a vertex from $ \cV_1 $ and the right one to a vertex from $ \cV_2. $

\begin{definition}
The {\em momentum operator} $ D = D (\Gamma) $ on $\Gamma$ is defined by
\begin{equation} \label{diffexp2} 
 (D f) (x) = \frac{1}{i} f'(x), \qquad x \in e_n, \quad n = 1, \dots, E,
\end{equation}
on the functions in the Sobolev space $ W_2^1 (\Gamma \setminus \cV) $ satisfying the continuity
condition
\begin{equation} \label{cont}
 f(x_j) = f(x_i) \quad \text{provided} \; x_j, x_i \in v
\end{equation}
at each vertex $v$.
\end{definition}

The momentum operator is uniquely determined by $ \Gamma $ since the orientation of
the edges is fixed; reversing the orientation of all edges simultaneously would lead to multiplication of $ D $ by $ -1 $. Note that the momentum operator is not self-adjoint in $L^2 (\Gamma)$ but its adjoint $ D^* $ is given by the same differential expression \eqref{diffexp2} on the functions from $ W_2^1 (\Gamma \setminus \cV) $ satisfying the balance
condition
\begin{equation} \label{balans}
\sum_{x_j \in v}  f(x_j) = 0
\end{equation}
 at each vertex $ v $.
 
The momentum operator can be used to express both the standard and anti-standard Laplacians; as a consequence, these operators are ``almost isospectral''.

\begin{lemma} \label{lem:isospec}
The standard Laplacian $ L^{\rm st} (\Gamma) $ and the anti-standard Laplacian \linebreak $ L^{\rm a/st} (\Gamma) $ on a bipartite, finite, compact metric graph $ \Gamma $ are related to the momentum operator $ D $ and its adjoint $ D^* $ via the relations
\begin{equation}\label{eq:DD}
L^{\rm st} (\Gamma)  = D^* D \qquad \text{and} \qquad L^{\rm a/st} (\Gamma) = D D^*.
\end{equation}
In particular, the positive eigenvalues of $ L^{\rm st} (\Gamma) $ and $ L^{\rm a/st} (\Gamma) $ coincide including multiplicities.
\end{lemma}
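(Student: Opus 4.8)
The plan is to prove the two operator identities in~\eqref{eq:DD} by a direct comparison of domains and actions, and then to read off the statement about positive eigenvalues from the classical fact that, for any densely defined closed operator $T$, the operators $T^*T$ and $TT^*$ have the same positive eigenvalues with the same multiplicities.

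First I would pin down the adjoint $D^*$ precisely; since this is essentially already asserted in the text I would keep it brief. Integration by parts on each edge gives
\begin{equation*}
 \langle Df, g\rangle - \langle f, Dg\rangle = \frac{1}{i}\sum_{n=1}^{E}\Big(f(x_{2n})\overline{g(x_{2n})} - f(x_{2n-1})\overline{g(x_{2n-1})}\Big),
\end{equation*}
and regrouping the endpoints by the vertices to which they belong and using the continuity of $f\in\dom D$ (so that $f(x_j)=f(v)$ whenever $x_j\in v$) turns the right-hand side into $\frac1i\sum_{v\in\cV_2}f(v)\,\overline{\sum_{x_j\in v}g(x_j)} - \frac1i\sum_{v\in\cV_1}f(v)\,\overline{\sum_{x_j\in v}g(x_j)}$, the sign split being caused precisely by the convention that each edge runs from a left endpoint in $\cV_1$ to a right endpoint in $\cV_2$. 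Because the values $f(v)$ can be prescribed arbitrarily and independently over the vertices by continuous $W_2^1$-functions (e.g.\ piecewise linear ones), this form vanishes for all $f\in\dom D$ if and only if $g$ satisfies the balance condition~\eqref{balans} at every vertex; hence $D^*$ is the expression $g\mapsto\frac1i g'$ on the $W_2^1$-functions obeying~\eqref{balans}.

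Next I would compute $D^*D$. By definition its domain is $\{f\in\dom D:\ Df\in\dom D^*\}$, i.e.\ $f\in W_2^1(\Gamma\setminus\cV)$ continuous at every vertex, $\tfrac1i f'\in W_2^1(\Gamma\setminus\cV)$ (equivalently $f\in W_2^2(\Gamma\setminus\cV)$), and $\sum_{x_j\in v}\tfrac1i f'(x_j)=0$ at every vertex $v$. The crucial bookkeeping point is that, under the chosen orientation, every endpoint of a vertex $v\in\cV_1$ is a left endpoint and every endpoint of a vertex $v\in\cV_2$ is a right endpoint; therefore $\partial f(x_j)=f'(x_j)$ for all $x_j\in v$ when $v\in\cV_1$ and $\partial f(x_j)=-f'(x_j)$ for all $x_j\in v$ when $v\in\cV_2$, so that in either case $\sum_{x_j\in v}f'(x_j)=0$ says exactly $\sum_{x_j\in v}\partial f(x_j)=0$. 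Thus $\dom(D^*D)$ consists precisely of the $W_2^2$-functions satisfying continuity and the balance of outgoing derivatives at each vertex, i.e.\ the standard vertex conditions, and on such $f$ one has $D^*Df=\tfrac1i\big(\tfrac1i f'\big)'=-f''=\cL f$; hence $L^{\rm st}(\Gamma)=D^*D$. The identity $L^{\rm a/st}(\Gamma)=DD^*$ follows by the mirror-image computation: the domain $\{g\in\dom D^*:\ D^*g\in\dom D\}$ unwinds to $g\in W_2^2(\Gamma\setminus\cV)$ satisfying the balance of its values and the continuity of $\tfrac1i g'$, hence of $g'$, at each vertex, and the same sign bookkeeping shows that continuity of $g'$ across a vertex is the same as continuity of $\partial g$; these are the anti-standard conditions, and $DD^*g=-g''$.

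Finally, for the spectral assertion I would note that $D$ is closed (if $f_n\to f$ and $Df_n\to g$ in $L_2(\Gamma)$, then $f_n$ converges in $W_2^1$, hence uniformly, on each edge, so that $f\in\dom D$ and $Df=g$), which by von Neumann's theorem makes $D^*D$ and $DD^*$ self-adjoint, in agreement with the self-adjointness of the two Laplacians noted above. If $\mu>0$ and $D^*Df=\mu f$ with $f\ne0$, then $\|Df\|^2=\langle D^*Df,f\rangle=\mu\|f\|^2>0$, so $Df\ne0$; moreover $D^*(Df)=\mu f\in\dom D$, so $Df\in\dom(DD^*)$ with $DD^*(Df)=D(D^*Df)=\mu\,Df$, and the identity $\|Dh\|^2=\mu\|h\|^2$ on $\ker(D^*D-\mu)$ shows that $h\mapsto Dh$ is injective there. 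The symmetric argument with $D^*$ and $DD^*$ provides an inverse, whence $\dim\ker(D^*D-\mu)=\dim\ker(DD^*-\mu)$ for every $\mu>0$; since $L^{\rm st}(\Gamma)=D^*D$ and $L^{\rm a/st}(\Gamma)=DD^*$, this says that the positive eigenvalues of the two Laplacians coincide with multiplicities. I expect the only delicate part to be the two domain identifications, and within them the orientation bookkeeping: one has to track carefully that the ``bare'' vertex conditions produced by the products $D^*D$ and $DD^*$, which come phrased in terms of $f'$ and $g'$, really are the standard and anti-standard conditions, which are phrased in terms of the normal derivatives $\partial f$ and $\partial g$. The remaining ingredients --- integration by parts and the $T^*T$/$TT^*$ eigenvector correspondence --- are routine.
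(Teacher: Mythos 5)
Your proposal is correct and follows essentially the same route as the paper: establish the factorizations $L^{\rm st}(\Gamma)=D^*D$, $L^{\rm a/st}(\Gamma)=DD^*$ (the paper declares these obvious from the definitions, while you carry out the integration-by-parts and orientation bookkeeping explicitly), and then transfer positive eigenvalues by mapping eigenfunctions with $D$ and $D^*$, checking injectivity to preserve multiplicities. The extra detail you supply, in particular the verification that the bipartite orientation makes $\sum_{x_j\in v} f'(x_j)=0$ and continuity of $g'$ coincide with the balance of $\partial f$ and continuity of $\partial g$, is exactly the point the paper leaves implicit, so no changes are needed.
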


\begin{proof}
The representations~\eqref{eq:DD} are obvious from the definitions of the involved operators and the form of $D^*$ as $\Gamma$ is bipartite. Assume that $\lambda$ and $\psi$ are a positive eigenvalue and a corresponding eigenfunction, respectively, for the standard Laplacian, that is, $D^* D \psi = \lambda \psi$ by~\eqref{eq:DD}. Then $D \psi$ is non-trivial and belongs to the domain of $D D^*$ and, hence,
\begin{align*}
 L^{\rm a/st} (\Gamma) D \psi = D D^* D \psi = \lambda D \psi,
\end{align*}
that is, $D \psi$ is an eigenfunction of $L^{\rm a/st} (\Gamma)$ corresponding to the eigenvalue $\lambda$. Similarly, if $\phi$ is an eigenfunction of $L^{\rm a/st} (\Gamma)$ corresponding to a positive eigenvalue then $D^* \phi$ is an eigenfunction of $L^{\rm st} (\Gamma)$ corresponding to the same eigenvalue. As $D$ respectively $D^*$ map linearly independent eigenfunctions to linearly independent functions, multiplicities are preserved.
\end{proof}

\subsection*{Spectral relation between the Laplacians}

Despite Lemma~\ref{lem:isospec} the standard and anti-standard Laplacian are not necessarily isospectral in general. In order to establish the precise relation between the eigenvalues we need to study the kernels of these operators. The statement of the following lemma is also contained in~\cite[Lemma~2.1]{BL19}. However, for completeness of the presentation we provide a short proof.

\begin{lemma} \label{lem:Kernel}
The dimensions of the kernels of the standard and anti-standard Laplacians  
on a bipartite, finite, compact, connected metric graph $\Gamma$ are
\begin{equation}
\dim \ker L^{\rm st} (\Gamma) = 1 \qquad \text{and} \qquad \dim \ker L^{\rm a/st} (\Gamma) = \beta.
\end{equation}
\end{lemma}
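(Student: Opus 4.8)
The plan is to compute the two kernels directly by solving the eigenvalue equation $-f'' = 0$ on each edge subject to the respective vertex conditions, and then to match the count against the known topological invariants of $\Gamma$.

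First I would treat the standard Laplacian. A function $f$ in $\ker L^{\rm st}(\Gamma)$ is affine on each edge, continuous across each vertex, and satisfies the balance condition $\sum_{x_j\in v}\partial f(x_j)=0$ at every vertex. Integrating $|f'|^2$ over $\Gamma$ and using the balance and continuity conditions to kill the boundary terms (the standard Green-type identity, which is exactly the statement that $L^{\rm st}(\Gamma)=D^*D$ is non-negative), one gets $f'\equiv 0$, so $f$ is constant on each edge, and continuity at vertices together with connectedness forces $f$ to be globally constant. Hence $\dim\ker L^{\rm st}(\Gamma)=1$. Equivalently one may argue via Lemma~\ref{lem:isospec}: $\ker L^{\rm st}(\Gamma)=\ker(D^*D)=\ker D$, and a function in $\ker D$ is continuous, piecewise constant, hence constant on the connected graph.

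Next I would handle the anti-standard Laplacian, where the decomposition $L^{\rm a/st}(\Gamma)=DD^*$ gives $\ker L^{\rm a/st}(\Gamma)=\ker(DD^*)=\ker D^*$, so it suffices to compute $\dim\ker D^*$. A function $g\in\ker D^*$ is constant on each edge (since $g'=0$), with the value continuous across each vertex ``from the derivative side'' (here the derivative-continuity condition of $D^*$ is vacuous for a constant, so it imposes nothing) and subject to the balance condition $\sum_{x_j\in v}g(x_j)=0$ at every vertex. Writing $g\equiv c_n$ on edge $e_n$ and recording for each vertex the linear relation $\sum_{e_n\ni v}c_n=0$ (with the appropriate multiplicity for loops), the kernel is the solution space of a linear system with $E$ unknowns and $V$ equations. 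The key point is that this is precisely the cycle space of the graph: using the bipartite orientation from $\cV_1$ to $\cV_2$, the constant $c_n$ on $e_n$ plays the role of a flow, the $\cV_1$-equations and $\cV_2$-equations are the outgoing/incoming flow-conservation laws, and their common solution space is the kernel of the incidence matrix, which for a connected graph has dimension $E-V+1=\beta$. Concretely one shows the $V$ vertex equations have rank exactly $V-1$ (their sum involves each $c_n$ exactly twice; on a bipartite graph one may instead note the $\cV_1$-sum equals the $\cV_2$-sum, giving one dependency, and no further dependency by connectedness), whence $\dim\ker D^*=E-(V-1)=\beta$.

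The main obstacle, and the only place requiring care, is the rank computation for the vertex incidence system defining $\ker D^*$: one must verify that the $V$ balance relations are independent up to the single global relation coming from bipartiteness (sum over $\cV_1$ equals sum over $\cV_2$), and that connectedness prevents any additional dependency. This is a standard fact about the incidence matrix of a connected graph, so I would either cite it or give the one-line argument that a vanishing linear combination of the rows, viewed as a function on vertices that is locally constant along edges, must be constant on each connected component. Once this rank count is in hand, both kernel dimensions follow immediately, completing the proof.
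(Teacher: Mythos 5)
Your proposal is correct, and its first half (the standard Laplacian) is essentially the paper's argument: the form identity forces $D\psi=0$, and continuity plus connectedness give a one-dimensional kernel of constants. For the anti-standard kernel you take a genuinely different route. The paper argues constructively: it removes edges $e_1,\dots,e_\beta$ to obtain a spanning tree, puts the alternating $\pm1$ function on each resulting cycle (well defined precisely because bipartiteness makes every cycle even), and then shows these $\beta$ functions span by subtracting them off and checking that a piecewise constant function on a tree satisfying the balance conditions vanishes. You instead identify $\ker L^{\rm a/st}(\Gamma)=\ker D^*$ with the solution space of the linear system $\sum_{e\ni v}c_e=0$, $v\in\cV$, i.e.\ with the kernel of the unsigned incidence matrix, and observe that after orienting all edges from $\cV_1$ to $\cV_2$ these unsigned balance equations differ from the signed flow-conservation equations only by flipping the signs of the $\cV_2$-rows; hence the solution space is the cycle space, of dimension $E-V+1=\beta$ for a connected graph. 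The two uses of bipartiteness are equivalent (even cycles versus the single row dependency ``sum of $\cV_1$-rows equals sum of $\cV_2$-rows''), and your version has the bonus of explaining at once the non-bipartite case $\dim\ker L^{\rm a/st}(\Gamma)=\beta-1$ quoted from \cite{BL19}, since there the unsigned incidence matrix has full rank $V$; the paper's version has the bonus of exhibiting explicit kernel eigenfunctions, in the same spirit as the cycle-supported eigenfunctions used later in the proof of Theorem~\ref{thm:malSehen}. One small correction to your sketch of the rank argument: a vanishing combination of the rows has coefficients satisfying $a_u+a_w=0$ along every edge, so the coefficient function \emph{alternates} in sign rather than being ``locally constant along edges''; connectedness then forces it to be a multiple of the $\pm1$ bipartition function, which is exactly the one dependency you already identified, so the rank is indeed $V-1$ and the conclusion stands.
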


\begin{proof}
Each function in the kernel of either $L^{\rm st} (\Gamma)$ or $L^{\rm a/st} (\Gamma)$ has to be constant on each edge. Indeed, if, e.g., $\psi \in \ker L^{\rm st} (\Gamma) = \ker D^* D$ then $0 = (D^* D \psi, \psi) = (D \psi, D \psi)$ in the inner product $( \cdot, \cdot)$ of $L^2 (\Gamma)$ and, hence, $D \psi = 0$; similarly for the anti-standard Laplacian. In particular, as $\Gamma$ is connected, the continuity condition at each vertex implies that each $\psi \in \ker L^{\rm st} (\Gamma)$ is constant on $\Gamma$ and, thus, $\dim \ker L^{\rm st} (\Gamma) = 1$.

Let $e_1, \dots, e_\beta$ be edges in $\Gamma$ such that removing $e_1, \dots, e_\beta$ from $\Gamma$ leads to a connected tree $T$. Then $e_j$ is part of a (unique up to shifts and inversion) cycle $C_j$ in $T \cup e_j.$  Since $ \Gamma $ is bipartite every cycle contains an even number of edges. Thus the function $\psi^j$ taking alternately the constant values $1$ and $- 1$ on the edges of $C_j$ and being constantly zero on the rest of $\Gamma$ belongs to the kernel of the anti-standard Laplacian since it satisfies the balance condition \eqref{balans}. In this way we obtain $ \beta $ linearly independent functions $ \psi^j $, all from the kernel. It remains to show that these functions span the kernel.

If $\psi \in \ker L^{\rm a/st} (\Gamma)$ is arbitrary then there exist constants $\gamma_1, \dots, \gamma_\beta \in \C$ such that $\gamma_j \psi^j$ coincides with $\psi$ on $e_j$, $j = 1, \dots, \beta$. Thus the function $\psi - \sum_{j = 1}^\beta \gamma_j \psi^j$ is supported on the tree $T$. It satisfies
Dirichlet conditions on each vertex of degree one in $T$ (including all vertices in $\partial \Gamma$). But every function which is constant on every edge of $T$ and satisfies the balance conditions \eqref{balans} is identically equal to zero on $T$. Thus $\psi^1, \dots, \psi^\beta$ form a basis of $\ker L^{\rm a/st} (\Gamma)$.
\end{proof}

Combining Lemma~\ref{lem:isospec} and Lemma~\ref{lem:Kernel} we arrive at the following result.

\begin{theorem}\label{thm:mainAst}
Assume that the finite, compact, connected metric graph $\Gamma$ is bipartite. Then
\begin{align*}
 \lambda_{k + \beta} \big( L^{\rm a/st} (\Gamma) \big) = \lambda_{k + 1} \big(L^{\rm st}(\Gamma) \big)
\end{align*}
holds for all $k \in \N$.
\end{theorem}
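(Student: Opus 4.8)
The plan is to combine the two preceding lemmas by a careful enumeration of eigenvalues with multiplicity; no new analytic input is needed. First I would record the spectral picture for $L^{\rm st}(\Gamma)$. By Lemma~\ref{lem:Kernel} the value $0$ is an eigenvalue of $L^{\rm st}(\Gamma)$ of multiplicity exactly one. Hence, writing $0 < \mu_1 \le \mu_2 \le \cdots$ for the non-decreasing enumeration (with multiplicities) of the \emph{positive} eigenvalues of $L^{\rm st}(\Gamma)$, we have $\lambda_1\big(L^{\rm st}(\Gamma)\big) = 0$ and $\lambda_{k+1}\big(L^{\rm st}(\Gamma)\big) = \mu_k$ for every $k \in \N$.

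Next I would do the same for $L^{\rm a/st}(\Gamma)$. Again by Lemma~\ref{lem:Kernel}, $0$ is an eigenvalue of $L^{\rm a/st}(\Gamma)$ of multiplicity exactly $\beta$ (so in the case $\beta = 0$ it is simply not an eigenvalue at all). By Lemma~\ref{lem:isospec} the positive eigenvalues of $L^{\rm a/st}(\Gamma)$, counted with multiplicities, are precisely the same numbers $\mu_1 \le \mu_2 \le \cdots$. Therefore the non-decreasing enumeration of the full spectrum of $L^{\rm a/st}(\Gamma)$ reads
\begin{align*}
 \underbrace{0, \dots, 0}_{\beta \text{ times}}, \; \mu_1, \; \mu_2, \; \dots,
\end{align*}
which gives $\lambda_j\big(L^{\rm a/st}(\Gamma)\big) = 0$ for $1 \le j \le \beta$ and $\lambda_{\beta + k}\big(L^{\rm a/st}(\Gamma)\big) = \mu_k$ for every $k \in \N$.

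Putting the two enumerations together yields $\lambda_{k + \beta}\big(L^{\rm a/st}(\Gamma)\big) = \mu_k = \lambda_{k+1}\big(L^{\rm st}(\Gamma)\big)$ for all $k \in \N$, which is the assertion. I do not anticipate a genuine obstacle: the proof is pure bookkeeping, and the only point deserving a word of care is the degenerate case $\beta = 0$ (e.g.\ a tree), where the statement reduces to $\lambda_k\big(L^{\rm a/st}(\Gamma)\big) = \lambda_{k+1}\big(L^{\rm st}(\Gamma)\big)$; the argument above still applies verbatim, since then $0$ simply drops out of the spectrum of $L^{\rm a/st}(\Gamma)$ and the positive spectra again match by Lemma~\ref{lem:isospec}.
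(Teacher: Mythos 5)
Your proposal is correct and follows exactly the paper's route: the paper proves the theorem by combining Lemma~\ref{lem:isospec} (positive eigenvalues coincide with multiplicities) with Lemma~\ref{lem:Kernel} (kernel dimensions $1$ and $\beta$), and your enumeration argument is just the explicit bookkeeping the paper leaves implicit.
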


Observe that any graph can be transformed into a bipartite graph by introducing additional vertices inside certain edges. This procedure has no influence on the standard Laplacian but it changes the anti-standard Laplacian. Hence the statement of Theorem~\ref{thm:mainAst} cannot be extended to arbitrary metric graphs. In fact, if $\Gamma$ is not bipartite then $\dim \ker L^{\rm a/st} (\Gamma) = \beta - 1$ by~\cite[Lemma~2.1]{BL19} while $\dim \ker L^{\rm st} (\Gamma) = 1$ still holds and thus $\lambda_\beta (L^{\rm a/st} (\Gamma)) > 0 = \lambda_1 (L^{\rm st} (\Gamma))$.

The following statement is a consequence of Theorem~\ref{thm:mainAst} and observations on the kernel of the anti-standard Laplacian. For equilateral quantum graphs it was proved in~\cite[Corollary~3.9]{BM13}.

\begin{corollary}
Let the metric graph $\Gamma$ be finite, compact and connected. Then the following are equivalent:
\begin{enumerate}
 \item $ L^{\rm st} (\Gamma) $ and $ L^{\rm a/st} (\Gamma) $ are isospectral;
 \item $\Gamma$ is bipartite and $\beta = 1$.
\end{enumerate}
\end{corollary}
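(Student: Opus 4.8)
The plan is to prove the equivalence of (i) and (ii) by combining Theorem~\ref{thm:mainAst} with the dimension count for the kernels from Lemma~\ref{lem:Kernel}, together with the remark on non-bipartite graphs that immediately precedes the corollary. First I would establish the implication (ii)$\Rightarrow$(i): if $\Gamma$ is bipartite and $\beta = 1$, then Theorem~\ref{thm:mainAst} yields $\lambda_{k+1}(L^{\rm a/st}(\Gamma)) = \lambda_{k+1}(L^{\rm st}(\Gamma))$ for all $k \in \N$, so the two operators agree on all but the lowest eigenvalue; it remains to compare $\lambda_1$ of each. By Lemma~\ref{lem:Kernel}, $\dim\ker L^{\rm st}(\Gamma) = 1$ and $\dim\ker L^{\rm a/st}(\Gamma) = \beta = 1$, so $\lambda_1(L^{\rm st}(\Gamma)) = 0 = \lambda_1(L^{\rm a/st}(\Gamma))$, and hence the full spectra coincide with multiplicities, i.e.\ the operators are isospectral.

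For the converse (i)$\Rightarrow$(ii), I would argue contrapositively and split into the two ways (ii) can fail. If $\Gamma$ is bipartite but $\beta \neq 1$: when $\beta = 0$ (a tree), Lemma~\ref{lem:Kernel} gives $\dim\ker L^{\rm a/st}(\Gamma) = 0$, so $\lambda_1(L^{\rm a/st}(\Gamma)) > 0 = \lambda_1(L^{\rm st}(\Gamma))$, and the spectra differ; when $\beta \geq 2$, Lemma~\ref{lem:Kernel} gives $\dim\ker L^{\rm a/st}(\Gamma) = \beta \geq 2$ while $\dim\ker L^{\rm st}(\Gamma) = 1$, so the eigenvalue $0$ has different multiplicity for the two operators and again they are not isospectral. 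If $\Gamma$ is not bipartite, then by the remark preceding the corollary (citing~\cite[Lemma~2.1]{BL19}) one has $\dim\ker L^{\rm a/st}(\Gamma) = \beta - 1$ whereas $\dim\ker L^{\rm st}(\Gamma) = 1$; in particular $\lambda_\beta(L^{\rm a/st}(\Gamma)) > 0 = \lambda_1(L^{\rm st}(\Gamma))$, so the operators cannot be isospectral. In all cases the negation of (ii) forces the negation of (i), which completes the proof.

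I do not expect a genuine obstacle here, since every ingredient has already been assembled in the preceding lemmas and the remark. The only point requiring a little care is the bookkeeping with multiplicities: one must be sure that ``isospectral'' is interpreted as equality of the full non-decreasing eigenvalue sequences counting multiplicities (as fixed in the Preliminaries), so that a mismatch in $\dim\ker$ alone already destroys isospectrality even when all positive eigenvalues coincide. I would state this explicitly when invoking Lemma~\ref{lem:Kernel}. A secondary subtlety is the non-bipartite case, where I am relying on the cited formula $\dim\ker L^{\rm a/st}(\Gamma) = \beta - 1$ from~\cite{BL19} rather than reproving it; alternatively one can note that a non-bipartite graph always contains an odd cycle, so the alternating $\pm 1$ construction from the proof of Lemma~\ref{lem:Kernel} fails and the kernel of the anti-standard Laplacian is strictly smaller than $\beta$, which suffices for the argument without needing the exact value.
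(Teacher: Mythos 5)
Your direction (ii)$\Rightarrow$(i) and the bipartite half of the converse are correct and follow the paper's route (Theorem~\ref{thm:mainAst} plus the kernel dimensions of Lemma~\ref{lem:Kernel}). The genuine gap is in the non-bipartite case, specifically when $\beta = 2$. There one has $\dim\ker L^{\rm a/st}(\Gamma) = \beta - 1 = 1 = \dim\ker L^{\rm st}(\Gamma)$, so the eigenvalue zero has the \emph{same} multiplicity for both operators and no kernel mismatch is available; and the inequality you quote from the remark, $\lambda_\beta\big(L^{\rm a/st}(\Gamma)\big) > 0 = \lambda_1\big(L^{\rm st}(\Gamma)\big)$, compares eigenvalues with \emph{different} indices ($\lambda_2$ of one operator against $\lambda_1$ of the other), so it is perfectly compatible with isospectrality — it only shows that the index-shifted identity of Theorem~\ref{thm:mainAst} cannot hold, which is a weaker statement. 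Your proposed fallback (an odd cycle makes the alternating $\pm 1$ construction fail, so $\dim\ker L^{\rm a/st}(\Gamma) < \beta$) runs into the same problem: knowing the kernel is strictly smaller than $\beta$ does not exclude that it is one-dimensional. So for a non-bipartite graph with $\beta = 2$ your argument does not rule out isospectrality; closing it requires knowing that the \emph{positive} spectra of $L^{\rm st}(\Gamma)$ and $L^{\rm a/st}(\Gamma)$ cannot coincide, and this is not supplied by Lemma~\ref{lem:isospec}, whose factorizations $L^{\rm st} = D^*D$, $L^{\rm a/st} = DD^*$ are only valid for bipartite graphs.

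For comparison, the paper's proof of the converse proceeds in the opposite order: from isospectrality it first extracts $\beta = 1$ together with $\dim\ker L^{\rm a/st}(\Gamma) = 1$ (using Theorem~\ref{thm:mainAst} and equality of the kernel dimensions), and only then invokes \cite{BL19} to conclude bipartiteness. Note that the delicate configuration is exactly the same one — non-bipartite with $\beta = 2$, where the kernel count $\beta - 1$ equals $1$ — so any complete write-up must address why this case is excluded; pure kernel bookkeeping of the kind you use, together with the index-mismatched inequality, does not do it.
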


\begin{proof}
If $\Gamma$ is bipartite and $\beta = 1$ then it follows from Theorem~\ref{thm:mainAst} that the two operators are isospectral. 
Conversely, by Theorem~\ref{thm:mainAst} their isospectrality implies that $\beta = 1$ and $\dim \ker ( L^{\rm a/st}(\Gamma) ) = 1$. But then~\cite[Lemma~2.1]{BL19} gives that $\Gamma$ is bipartite.
\end{proof}

We would like to point out specifically the situation where $\Gamma$ is a finite metric tree as the spectrum of $L^{\rm a/st} (\Gamma)$ in this case was studied recently in~\cite{ZS19}.

\begin{corollary}\label{cor:tree}
Let $\Gamma$ be a finite, compact, connected metric tree. Then
\begin{align*}
 \lambda_{k} \big( L^{\rm a/st} (\Gamma) \big) = \lambda_{k + 1} \big(L^{\rm st}(\Gamma) \big)
\end{align*}
holds for all $k \in \N$.
\end{corollary}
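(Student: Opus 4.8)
The plan is to obtain this immediately from Theorem~\ref{thm:mainAst} by verifying its two hypotheses for a tree. First I would note that every finite, compact, connected metric tree $\Gamma$ is bipartite: since $\Gamma$ contains no cycles, the criterion recalled above — that a graph is bipartite precisely when each of its cycles has an even number of edges — holds vacuously. Concretely, one may fix a vertex $v_0$ and let $\cV_1$ (resp.\ $\cV_2$) consist of the vertices whose distance to $v_0$, measured as the number of edges on the unique path to $v_0$, is even (resp.\ odd); uniqueness of that path, which is exactly the tree property, makes this colouring well defined, and every edge clearly joins a vertex of one colour to a vertex of the other.

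Next I would compute the first Betti number: by definition $\beta = E - V + 1$, and a connected tree satisfies $E = V - 1$, so $\beta = 0$.

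With $\Gamma$ bipartite and $\beta = 0$, Theorem~\ref{thm:mainAst} applies and its conclusion $\lambda_{k+\beta}\big(L^{\rm a/st}(\Gamma)\big) = \lambda_{k+1}\big(L^{\rm st}(\Gamma)\big)$ reduces to $\lambda_k\big(L^{\rm a/st}(\Gamma)\big) = \lambda_{k+1}\big(L^{\rm st}(\Gamma)\big)$ for all $k \in \N$, which is exactly the assertion. (Alternatively, one can argue in one line from Lemmas~\ref{lem:isospec} and~\ref{lem:Kernel} directly: since $\beta = 0$ the anti-standard Laplacian has trivial kernel, so all its eigenvalues are positive and, by Lemma~\ref{lem:isospec}, coincide with the positive eigenvalues of $L^{\rm st}(\Gamma)$, i.e.\ with $\lambda_2(L^{\rm st}(\Gamma)) \le \lambda_3(L^{\rm st}(\Gamma)) \le \cdots$, because $\dim\ker L^{\rm st}(\Gamma) = 1$.) There is no real obstacle in this argument; the only points that deserve a word are the standard fact that trees are bipartite and the elementary identity $\beta = 0$ for trees.
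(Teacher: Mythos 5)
Your proposal is correct and follows exactly the paper's route: the corollary is an immediate specialisation of Theorem~\ref{thm:mainAst}, using that a tree is bipartite (no cycles, so the even-cycle criterion holds vacuously) and that $\beta = E - V + 1 = 0$. The alternative one-line argument via Lemmas~\ref{lem:isospec} and~\ref{lem:Kernel} is also sound, but it is just the content of Theorem~\ref{thm:mainAst} unwound in the case $\beta=0$, not a genuinely different proof.
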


As a consequence of this, all known results on the spectra of the standard Laplacian on a finite, compact metric tree carry over directly to the anti-standard Laplacian on the tree. To give a few examples, the lower eigenvalue bound 
\begin{align*}
 \lambda_k \big( L^{\rm a/st} (\Gamma) \big) \geq \frac{(k + 1)^2 \pi^2}{4 L (\Gamma)^2}, \qquad k \in \N,
\end{align*}
on any tree follows immediately from~\cite{F05}, see also~\cite{N87} and~\cite{KN14}; here $L (\Gamma)$ denotes the total length of $\Gamma$. Equality holds if and only if $\Gamma$ is an equilateral star with $k + 1$ edges. By means of Theorem~\ref{thm:mainAst} similar statements follow for any bipartite finite graph. More specifically for trees the upper estimates
\begin{align}\label{eq:tree1}
 \lambda_k \big( L^{\rm a/st} (\Gamma) \big) \leq \frac{k^2 \pi^2}{\diam (\Gamma)^2}, \qquad k \in \N,
\end{align}
and
\begin{align}\label{eq:tree2}
 \lambda_k \big( L^{\rm a/st} (\Gamma) \big) \leq \frac{k^2 E^2 \pi^2}{4 L (\Gamma)^2}, \qquad k \in \N,
\end{align}
provided $E \geq 2$, follow immediately from the corresponding results for the standard Laplacian in~\cite{R17}, where $\diam (\Gamma)$ is the diameter of $\Gamma$. In the latter estimate, equality holds for $k = 1$ if and only if $\Gamma$ is any equilateral star, and for $k > 1$ if and only if $\Gamma$ is an equilateral star with $E = 2$. The estimates~\eqref{eq:tree1}--\eqref{eq:tree2} were shown recently in~\cite{ZS19} in a more complicated way, mimicking the proofs for the standard Laplacian. We point out that upper estimates for anti-standard eigenvalues on general bipartite graphs can be derived from, e.g.,~\cite{BL17,BKKM17,BKKM19,K19,KKMM16} with the help of Theorem~\ref{thm:mainAst}.

Another example concerns estimates involving the doubly connected part of the graph -- the closed subgraph consisting of all $ x \in \Gamma $ for which there 
is a non-self-intersecting path in $ \Gamma $ starting and ending at $ x. $ Assume that the doubly connected part of $ \Gamma $ has size $ L_{\rm dc} \leq L(\Gamma) $. Then using \cite[Theorem 6.3]{BKKM19} we obtain the estimate
\begin{equation}
\lambda_{\beta +1} \big( L^{\rm a/st} (\Gamma) \big) \geq \lambda_2 \big( L^{\rm st} (\mathcal D) \big)
\end{equation}
for the eigenvalues of the anti-standard Laplacian, provided $ \Gamma $ is bipartite, where $ \mathcal D $ is the symmetric dumbbell graph of total length $ L(\Gamma) $ and both loops of length $ L_{\rm dc}/2. $ Similarly, if the doubly connected part of a bipartite graph $ \Gamma $ has a connected component of length $ L \leq L(\Gamma)$, then the estimate
\begin{equation}
\lambda_{\beta +1} \big( L^{\rm a/st} (\Gamma) \big) \geq \lambda_2 \big(L^{\rm st} (\mathfrak L) \big), 
\end{equation}
holds (following~\cite[Theorem 6.5]{BKKM19}), where $ \mathfrak L $ is the lasso graph of total length $ L(\Gamma) $ and length of the loop $L$.

It is a special feature of trees that functions satisfying standard vertex conditions at interior vertices can be transformed into functions with anti-standard interior vertex conditions by a simple transformation that does not increase the eigenvalues. More precisely, the following lemma holds; here $L^{\rm st, D} (\Gamma)$ denotes the Laplacian subject to standard vertex conditions at all vertices in $\cV \setminus \partial \Gamma$ and Dirichlet boundary conditions on $\partial \Gamma$.

\begin{lemma}\label{lem:innerTrafo}
If $\Gamma$ is a finite, compact, connected tree then 
\begin{align*}
 \lambda_k \big(L^{\rm a/st} (\Gamma) \big) \leq \lambda_k \big(L^{\rm st, D} (\Gamma) \big)
\end{align*}
holds for all $k \in \N$. 
\end{lemma}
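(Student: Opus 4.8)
The statement compares $L^{\mathrm{a/st}}(\Gamma)$ with $L^{\mathrm{st},D}(\Gamma)$, so a natural route is the min-max principle on the level of quadratic forms. Both operators act via $f\mapsto -f''$ on every edge; the form domain of $L^{\mathrm{st},D}(\Gamma)$ consists of functions in $W^1_2(\Gamma\setminus\cV)$ that are continuous at every vertex and vanish on $\partial\Gamma$, while the form domain of $L^{\mathrm{a/st}}(\Gamma)$ consists of functions in $W^1_2(\Gamma\setminus\cV)$ satisfying the balance condition $\sum_{x_j\in v}f(x_j)=0$ at every vertex; both forms are $\mathfrak{q}[f]=\sum_n\int_{e_n}|f'|^2$. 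The plan is to construct, for a given $n$-dimensional subspace on which the Rayleigh quotient of $L^{\mathrm{st},D}(\Gamma)$ is controlled, an $n$-dimensional subspace admissible for $L^{\mathrm{a/st}}(\Gamma)$ on which the Rayleigh quotient is \emph{not larger}. The key is a linear, norm-nonincreasing, form-nonincreasing map $\Phi$ from the form domain of $L^{\mathrm{st},D}(\Gamma)$ into the form domain of $L^{\mathrm{a/st}}(\Gamma)$.

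\textbf{Construction of the sign-flip map.} Since $\Gamma$ is a tree it is bipartite; fix a two-colouring $\cV=\cV_1\cup\cV_2$ and orient every edge from its $\cV_1$-endpoint to its $\cV_2$-endpoint. Following the idea already used in the proof of Lemma~\ref{lem:Kernel}, I would exploit that on a tree the colouring induces a consistent "sign" on edges once a root is chosen; more precisely, because $\Gamma$ is a tree, there is a function $\sigma:\cE\to\{+1,-1\}$ such that along any path the sign alternates appropriately, and concretely one can define $\Phi f$ on edge $e_n$ by $(\Phi f)(x):=\sigma(e_n)f(x)$ after, if necessary, reflecting the parametrisation so that continuity of $f$ at an interior vertex turns into the balance condition for $\Phi f$. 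The point is that at an interior vertex $v$ of the tree, with the edges suitably oriented, the values $f(x_j)$, $x_j\in v$, are all equal (continuity), and the signs $\sigma$ can be chosen so that exactly one incident edge receives one sign and the rest receive the opposite — but for a vertex of arbitrary degree this naive two-value scheme does \emph{not} make $\sum_{x_j\in v}(\Phi f)(x_j)$ vanish. So the correct transformation is subtler: it should send $f$ to the function whose value at each endpoint $x_j\in v$ is $f(x_j)-\frac{1}{\deg v}\sum_{x_i\in v}f(x_i)$, i.e.\ one subtracts the vertex average at \emph{every} vertex. This is manifestly linear, it produces a function satisfying the balance condition at every vertex (including, at a degree-one vertex, forcing the Dirichlet value $0$, which is consistent with the domain of $L^{\mathrm{st},D}(\Gamma)$ where $f$ already vanishes there), and it is the identity on functions already balanced.

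\textbf{Form and norm estimates.} The remaining work is to check that this $\Phi$ is form-nonincreasing and norm-nondecreasing in the right direction. Since $\Phi f$ and $f$ differ only by constants on each edge, one has $(\Phi f)'=f'$ on every edge, so $\mathfrak{q}[\Phi f]=\mathfrak{q}[f]$ exactly — the form is \emph{preserved}, not merely not increased. For the $L^2$-norms one needs $\|\Phi f\|^2\le\|f\|^2$ edgewise or globally; subtracting a constant $c_n$ from $f$ on $e_n$ changes $\int_{e_n}|f-c_n|^2$, which is minimised at $c_n=\frac{1}{|e_n|}\int_{e_n}f$, not at the vertex-average value, so edgewise monotonicity can fail. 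This is the main obstacle and the place where treeness must really be used: one should instead argue globally, writing $\|f\|^2-\|\Phi f\|^2=2\Real(f,g)-\|g\|^2$ with $g:=f-\Phi f$ the (edgewise constant) correction, and show this is $\ge 0$ by a summation-by-parts / Gram-matrix argument over the tree, exploiting that the correction $g$ is built from vertex averages and that the tree structure makes the relevant quadratic form in the vertex data positive semidefinite. I would first verify the whole scheme on a star graph (where the computation is explicit and the positivity is a short linear-algebra fact about the matrix $I-\frac1d\mathbf{1}\mathbf{1}^\top$), then extend to a general tree by induction on the number of vertices, peeling off a pendant star, or by a direct argument that the correction function $g$ is orthogonal "enough" to $\Phi f$. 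Once $\|\Phi f\|\le\|f\|$ and $\mathfrak{q}[\Phi f]=\mathfrak{q}[f]$ are in hand, and one checks that $\Phi$ restricted to eigenspaces of $L^{\mathrm{st},D}(\Gamma)$ for $\lambda_1,\dots,\lambda_k$ is injective (so that $\Phi$ of a $k$-dimensional space is $k$-dimensional), the min-max principle
\[
 \lambda_k\big(L^{\mathrm{a/st}}(\Gamma)\big)\le\sup_{0\ne f\in\Phi(\mathcal{S})}\frac{\mathfrak{q}[f]}{\|f\|^2}\le\sup_{0\ne f\in\mathcal{S}}\frac{\mathfrak{q}[f]}{\|f\|^2}=\lambda_k\big(L^{\mathrm{st},D}(\Gamma)\big)
\]
for the optimal $k$-dimensional subspace $\mathcal{S}\subset\dom\,\mathfrak{q}_{\mathrm{st},D}$ finishes the proof.
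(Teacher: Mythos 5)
Your min-max strategy and your starting observation are sound, and you correctly see that a naive $\pm 1$ sign flip cannot turn continuity into the balance condition at a vertex of degree $d\geq 3$. But at exactly that point the key idea goes missing: instead of abandoning the phase idea, one should multiply $f$ on each edge $e_n$ by a unimodular complex number $e^{i\phi_n}$, assigning at each interior vertex of degree $d$ the $d$-th roots of unity (suitably rotated, and propagated edge by edge from a boundary vertex through the tree -- possible precisely because $\Gamma$ is a tree). Then $\sum_{e_n\ \text{incident to}\ v} e^{i\phi_n}=0$ at every interior vertex, so the corresponding multiplication operator $U$ is \emph{unitary}, preserves both $\int_\Gamma |f'|^2\,\d x$ and $\|f\|_{L^2}^2$, and maps functions that are continuous at the vertices and vanish on $\partial\Gamma$ to functions satisfying the anti-standard balance condition. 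The lemma then follows immediately from the inclusion of the form domain of $U L^{\rm st,D}(\Gamma)U^*$ into that of $L^{\rm a/st}(\Gamma)$, with no norm comparison needed; this is the paper's proof.

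Your substitute map $\Phi$ (subtract the vertex average at every vertex) has a genuine gap. First, it is not well-defined under your own requirement $(\Phi f)'=f'$: that requirement forces the correction to be a single constant on each edge, but the two endpoints of an edge lie in two different vertices whose averages differ in general, so no edgewise-constant shift can realize the prescribed values at both endpoints. Worse, for $f$ in the form domain of $L^{\rm st,D}(\Gamma)$, i.e.\ continuous at every vertex, your prescription forces $\Phi f$ to vanish at \emph{all} vertices, which for an edgewise-constant shift would require $f(x_{2n-1})=f(x_{2n})$ on every edge; repairing this with affine corrections destroys the identity $\int_\Gamma|(\Phi f)'|^2\,\d x=\int_\Gamma|f'|^2\,\d x$. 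Second, even granting a well-defined $\Phi$ that preserves the form, the norm inequality you aim for points the wrong way: for your final chain of inequalities you need $\|\Phi f\|_{L^2}\geq\|f\|_{L^2}$ (so that the Rayleigh quotient does not increase), not $\|\Phi f\|_{L^2}\leq\|f\|_{L^2}$ as stated -- and neither inequality is actually proved, only announced as a plan via star graphs and induction. So the crucial analytic step of your argument is both unestablished and aimed at the wrong inequality, whereas the unitary phase construction sidesteps it entirely.
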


\begin{proof}
Recall that the edges of $\Gamma$ are parametrized such that at each vertex either all incident edges are incoming or all are outgoing. Let us first show that we can choose numbers $\phi_1, \dots, \phi_E \in \R$ such that for each $v \in \cV \setminus \partial \Gamma$ we have
\begin{align}\label{eq:sumZero}
 \sum_{e_n~\text{incident to}~v} e^{i \phi_n} = 0.
\end{align}
Indeed, let $v_0 \in \partial \Gamma$ be arbitrary. Let $e_{n_0}$ be the edge incident to $v_0$ and let $v_1$ be the vertex different from $v_0$ to which $e_{n_0}$ is incident. Define $\phi_{n_0} = 0$ and assign the remaining unit roots of the equation $z^{\deg (v_1)} = 1$ to the edges $e_{n_1}, \dots, e_{n_{\deg (v) - 1}}$ incident to $v_1$ and different from $e_{n_0}$. Then the condition~\eqref{eq:sumZero} is satisfied for $v = v_1$. For the vertex $v_2$ different from $v_1$ to which $e_{n_1}$ is incident, assign to the edges incident to $v_2$ and different from $e_{n_1}$ the unit roots of $z^{\deg (v_2)} = 1$ different from $1$, multiplied by $e^{i \phi_{n_1}}$. Then the condition~\eqref{eq:sumZero} is satisfied also at $v = v_2$. Successively one can go through the whole tree and reach~\eqref{eq:sumZero} at each vertex $v \in \cV \setminus \partial \Gamma$.

Now define an operator $U : L^2 (\Gamma) \to L^2 (\Gamma)$ by
\begin{align*}
 (U f) (x) := e^{i \phi_n} f (x), \qquad x \in e_n, \quad n = 1, \dots, E.
\end{align*}
Clearly, $U$ is unitary. Consider the quadratic form
\begin{align*}
 \sa_U [f] & = \int_\Gamma |(U^* f)'|^2 \d x = \int_\Gamma |f'|^2 \d x
\end{align*}
defined on all functions $f \in W^1_2 (\Gamma \setminus \cV)$ such that $U^* f$ is continuous at each vertex and satisfies Dirichlet conditions on $\partial \Gamma$. This form is densely defined, nonnegative and closed with representing self-adjoint operator $U L^{\rm st, D} (\Gamma) U^*$. As the domain of this form is contained in the form domain of $L^{\rm a/st} (\Gamma)$ due to~\eqref{eq:sumZero} and the two forms have the same action on the smaller domain, the assertion of the lemma follows.
\end{proof}

The construction of the unitary operator $U$ in the previous proof does not work in general for bipartite graphs. Take, for instance, the lasso graph $\Gamma$ consisting of three edges $e_1 = [x_1, x_2], e_2 = [x_3, x_4], e_3 = [x_5, x_6]$ and three vertices $v_1 = \{x_1\}$, $v_2 = \{x_2, x_4, x_6\}$ and $v_3 = \{x_3, x_5\}$, see Figure~\ref{FigLasso}. Then $\Gamma$ is bipartite and in order to assign to each edge $e_n$ a complex number $z_n$ of modulus one such that the sum of these numbers is zero at each vertex one would need both $z_1 + z_2 + z_3 = 0$ and $z_2 + z_3 = 0$, a contradiction. 

\begin{figure}[htb]

\setlength{\unitlength}{1cm}

\centering

\begin{picture}(6,2)(0,0)
 \put(1.3,1){\circle{6}}
  \put(2,1){\line(1,0){3}}
 \put(1.5,0.7){$x_4$}
 \put(1.5,1.2){$x_6$}
 \put(2.1,0.6){$x_2$}
 \put(4.8,0.6){$x_1$}
 \put(0.2,0.7){$x_3$}
 \put(0.2,1.2){$x_5$}
\put(2,1){\circle*{0.1}}
\put(5,1){\circle*{0.1}}
\put(0.6,1){\circle*{0.1}}
\end{picture}

\caption{Lasso graph.}
\label{FigLasso}

\end{figure}
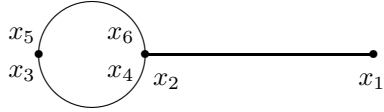

As a consequence of the previous lemma and Corollary~\ref{cor:tree} we get the following inequality between the eigenvalues of the standard Laplacian and the Laplacian $L^{\rm st, D} (\Gamma)$ subject to standard conditions on $\cV \setminus \partial \Gamma$ and Dirichlet conditions on $\partial \Gamma$.

\begin{corollary}\label{cor:Friedlander}
Let $\Gamma$ be a finite, compact, connected metric tree. Then 
\begin{align*}
 \lambda_{k + 1} \big(L^{\rm st} (\Gamma) \big) \leq \lambda_k \big(L^{\rm st, D} (\Gamma) \big)
\end{align*}
holds for all $k \in \N$.
\end{corollary}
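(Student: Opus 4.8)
The plan is to combine the two results that precede this corollary in the chain of implications: Corollary~\ref{cor:tree}, which identifies the spectra via $\lambda_k(L^{\rm a/st}(\Gamma)) = \lambda_{k+1}(L^{\rm st}(\Gamma))$ for a finite, compact, connected metric tree, and Lemma~\ref{lem:innerTrafo}, which gives $\lambda_k(L^{\rm a/st}(\Gamma)) \le \lambda_k(L^{\rm st,D}(\Gamma))$ for such a tree. Chaining these two, one obtains for every $k \in \N$
\begin{align*}
 \lambda_{k+1}\big(L^{\rm st}(\Gamma)\big) = \lambda_k\big(L^{\rm a/st}(\Gamma)\big) \le \lambda_k\big(L^{\rm st,D}(\Gamma)\big),
\end{align*}
which is exactly the asserted inequality. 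So the proof is essentially a one-line concatenation of the two cited results.

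Before writing it down, however, I would double-check that both ingredients genuinely apply under the stated hypothesis ``finite, compact, connected metric tree'' with no extra assumptions. Corollary~\ref{cor:tree} is stated precisely for such trees (it is the $\beta=0$ specialisation of Theorem~\ref{thm:mainAst}, using that every tree is bipartite), so there is nothing to verify there. Lemma~\ref{lem:innerTrafo} is also stated for finite, compact, connected trees, and its proof constructs the unimodular weights $e^{i\phi_n}$ by starting at a boundary vertex and propagating through the tree — this propagation is exactly what works for trees and fails for graphs with cycles, as the lasso example in the excerpt shows. Hence the corollary cannot be extended beyond trees via this route, which is consistent with the remark in the introduction that~\eqref{eq:jaha} is false once $\beta \ge 1$.

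I do not anticipate any real obstacle here: the corollary is stated immediately after, and explicitly ``as a consequence of,'' Lemma~\ref{lem:innerTrafo} and Corollary~\ref{cor:tree}, so the only ``work'' is to align the index shifts correctly. The one place to be slightly careful is the bookkeeping of indices: Corollary~\ref{cor:tree} matches the $k$-th anti-standard eigenvalue with the $(k{+}1)$-st standard eigenvalue (the shift accounting for $\dim\ker L^{\rm st}(\Gamma)=1$ versus $\dim\ker L^{\rm a/st}(\Gamma)=\beta=0$ on a tree), whereas Lemma~\ref{lem:innerTrafo} is index-preserving between $L^{\rm a/st}(\Gamma)$ and $L^{\rm st,D}(\Gamma)$. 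Substituting the equality into the inequality at the same value of $k$ produces the claimed comparison with no further effort, so the proof can be kept to a couple of lines.
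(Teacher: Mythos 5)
Your proof is correct and is exactly the argument intended in the paper, which presents this corollary as the immediate consequence of Lemma~\ref{lem:innerTrafo} combined with Corollary~\ref{cor:tree}, with the same index bookkeeping. Nothing to add.
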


This result was proven in~\cite[Lemma~4.5]{BBW15} by completely different methods (in the more general form given in Corollary~\ref{cor:FriedlanderB} below). It is the counterpart of the inequality between Neumann and Dirichlet Laplacian eigenvalues on domains proved by Friedlander in~\cite{F91} and refined by Filonov in~\cite{F04}.

\section{Generalisations}

The methods developed in the previous section for the standard and anti-standard Laplacians on bipartite graphs can be applied to larger classes of vertex conditions.

\subsection*{Mixed Dirichlet--Neumann conditions on the boundary}

The considerations in Section~\ref{sec:dual} were based on the fact that applying the momentum operator turns vertex conditions into their duals, i.e.\ standard conditions into anti-standard conditions and vice versa, provided the graph is bipartite. This principle extends naturally to mixed boundary conditions where some boundary vertices are equipped with Dirichlet conditions and the remaining ones with Neumann conditions. 

Let $\Gamma$ be a finite, compact, connected and bipartite metric graph, let $B \subset \partial \Gamma$ be any subset of boundary vertices and denote by $ L^{\rm st, D} (\Gamma, B) $ the Laplacian subject to Dirichlet conditions at the vertices in $B$ and standard conditions at all other vertices (including $ \partial \Gamma \setminus B$). Then $L^{\rm st, D} (\Gamma, B)$ allows a decomposition into a product of momentum operators, $L^{\rm st, D} (\Gamma, B) = D^* D$, where $D = \frac{1}{i} \frac{d}{dx}$ is defined on the functions in $W^1_2 (\Gamma \setminus \cV)$ that are continuous at all vertices and satisfy Dirichlet conditions on $B$. Then $D D^*$ coincides with the Laplacian $L^{\rm a/st,N} (\Gamma, B)$ subject to Neumann conditions on $B$ and anti-standard conditions on all remaining vertices (in particular, Dirichlet conditions on $\partial \Gamma \setminus B$). In particular, $L^{\rm st, D} (\Gamma, B)$ and $L^{\rm a/st, N} (\Gamma, B)$ have the same positive eigenvalues including multiplicities. If $B$ is nonempty then a reasoning similar to the proof of Lemma~\ref{lem:Kernel} yields
\begin{align*}
 \dim \ker L^{\rm st, D} (\Gamma, B) = 0 \quad \text{and} \quad \dim \ker L^{\rm a/st, N} (\Gamma, B) = \beta + |B| - 1.
\end{align*}
Hence we get the following counterpart of Theorem~\ref{thm:mainAst}.

\begin{theorem}
Let $\Gamma$ be a finite, compact, connected metric graph that is bipartite and let $B \subset \partial \Gamma$ be nonempty. Then 
\begin{align*}
 \lambda_{k + \beta + |B| - 1} \big(L^{\rm a/st, N} (\Gamma, B) \big) = \lambda_k \big(L^{\rm st, D} (\Gamma, B) \big)
\end{align*}
holds for all $k \in \N$.
\end{theorem}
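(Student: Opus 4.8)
The plan is to mirror exactly the argument that led to Theorem~\ref{thm:mainAst}, now carried out for the pair $L^{\rm st, D}(\Gamma,B)$ and $L^{\rm a/st, N}(\Gamma,B)$, relying on the three ingredients already assembled in the paragraph preceding the statement: (i) the momentum decomposition $L^{\rm st, D}(\Gamma,B) = D^* D$ and $L^{\rm a/st, N}(\Gamma,B) = D D^*$ with $D$ the first-order operator on $W^1_2(\Gamma\setminus\cV)$ enforcing continuity everywhere and the Dirichlet condition on $B$; (ii) the ``almost isospectrality'' consequence, namely that $D^* D$ and $D D^*$ share all their positive eigenvalues including multiplicities (this is the abstract fact used in Lemma~\ref{lem:isospec}, applied verbatim here with the new $D$); and (iii) the kernel dimension count $\dim\ker L^{\rm st,D}(\Gamma,B) = 0$ and $\dim\ker L^{\rm a/st,N}(\Gamma,B) = \beta + |B| - 1$. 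Given these, the proof is a bookkeeping step: arrange the eigenvalues of both operators non-decreasingly with multiplicities and observe that the list for $L^{\rm a/st, N}(\Gamma,B)$ begins with $\beta + |B| - 1$ zeros followed by the common positive spectrum, whereas the list for $L^{\rm st, D}(\Gamma,B)$ begins immediately with that same positive spectrum (no zero eigenvalue, since $B\neq\emptyset$). Shifting the index accordingly gives $\lambda_{k + \beta + |B| - 1}\big(L^{\rm a/st, N}(\Gamma,B)\big) = \lambda_k\big(L^{\rm st, D}(\Gamma,B)\big)$ for all $k\in\N$.

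First I would spell out step (ii) in one line: if $\lambda > 0$ and $D^* D\psi = \lambda\psi$, then $D\psi\neq 0$, $D\psi\in\dom(D D^*)$, and $D D^*(D\psi) = D(D^* D\psi) = \lambda D\psi$; symmetrically $D^*$ sends $D D^*$-eigenfunctions for $\lambda>0$ back, and since $D$, $D^*$ are injective on the respective positive eigenspaces they preserve linear independence, hence multiplicities. This is literally the content of the proof of Lemma~\ref{lem:isospec} with $D$ replaced by the present $D$, so I would simply cite that lemma's argument rather than rewrite it. Then I would invoke the kernel formulas stated just above the theorem (which the excerpt says follow ``from a reasoning similar to the proof of Lemma~\ref{lem:Kernel}''), treating them as given.

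The only genuine content, and hence the step I would present most carefully, is the index arithmetic together with the assertion that $L^{\rm st, D}(\Gamma,B)$ has trivial kernel when $B\neq\emptyset$. For the kernel triviality: any $\psi\in\ker L^{\rm st,D}(\Gamma,B) = \ker D^* D$ satisfies $0 = (D^* D\psi,\psi) = \|D\psi\|^2$, so $\psi$ is edgewise constant; continuity at all vertices and connectedness of $\Gamma$ force $\psi$ to be globally constant, and the Dirichlet condition at the (nonempty) set $B$ then forces $\psi \equiv 0$. So $\lambda_k(L^{\rm st, D}(\Gamma,B))$, $k\in\N$, enumerates precisely the positive eigenvalues (with multiplicity). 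On the other side, $\lambda_1 = \dots = \lambda_{\beta+|B|-1} = 0$ for $L^{\rm a/st, N}(\Gamma,B)$ and $\lambda_{\beta+|B|-1+j}$ for $j\geq 1$ is the $j$-th positive eigenvalue. Matching the two enumerations of the common positive spectrum gives the claimed shift, which is the whole statement.

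I do not expect a real obstacle here. The potential soft spot is purely expository: one must make sure the kernel dimension formulas are genuinely available and not themselves in need of proof; since the excerpt asserts them right before the theorem, I treat them as established. A secondary point to state cleanly is that $D D^*$ and $D^* D$ are indeed the operators $L^{\rm a/st, N}(\Gamma,B)$ and $L^{\rm st, D}(\Gamma,B)$ — this is where one uses that $\Gamma$ is bipartite and the edges are oriented from $\cV_1$ to $\cV_2$, so that $D^*$ carries the dual (balance-type) vertex conditions, exactly as recorded in the discussion of $D^*$ in Section~\ref{sec:dual}; again this is asserted in the excerpt, so I would reference it rather than reprove it. With all three inputs in hand, the proof is two short paragraphs.
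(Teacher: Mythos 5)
Your proposal is correct and follows essentially the same route as the paper, which proves this theorem by exactly the ingredients you list: the momentum decomposition $L^{\rm st, D}(\Gamma,B)=D^*D$, $L^{\rm a/st, N}(\Gamma,B)=DD^*$ for the modified $D$, the equality of positive spectra as in Lemma~\ref{lem:isospec}, the kernel dimensions $0$ and $\beta+|B|-1$ obtained as in Lemma~\ref{lem:Kernel}, and the resulting index shift. Your added one-line verification that $\ker L^{\rm st,D}(\Gamma,B)$ is trivial when $B\neq\emptyset$ is a harmless (and welcome) expansion of what the paper leaves implicit.
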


As the reasoning of Lemma~\ref{lem:innerTrafo} also applies to mixed boundary conditions on $\partial \Gamma$, on any tree we get the following corollary analogous to Corollary~\ref{cor:Friedlander}, which is~\cite[Lemma~4.5]{BBW15} in its full generality, but with a different proof.

\begin{corollary}\label{cor:FriedlanderB}
Assume that $\Gamma$ is a finite, compact, connected metric tree. Then for $B \subset \partial \Gamma$
\begin{align}\label{eq:tree}
 \lambda_{k} \big(L^{\rm st,D} (\Gamma, B)\big) \leq \lambda_{k + |B| - 1} \big(L^{\rm st,D} (\Gamma, \partial \Gamma \setminus B ) \big)
\end{align}
holds for all $k \in \N$.
\end{corollary}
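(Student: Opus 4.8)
The plan is to obtain \eqref{eq:tree} from the preceding Theorem together with a mixed-boundary version of Lemma~\ref{lem:innerTrafo}, in exactly the way Corollary~\ref{cor:Friedlander} follows from Lemma~\ref{lem:innerTrafo} and Corollary~\ref{cor:tree}. If $B=\emptyset$ then \eqref{eq:tree} reads $\lambda_k(L^{\rm st}(\Gamma))\le\lambda_{k-1}(L^{\rm st,D}(\Gamma,\partial\Gamma))$, which after the substitution $k\mapsto k+1$ is precisely Corollary~\ref{cor:Friedlander} (and there is nothing to prove for $k=1$). So assume $B\ne\emptyset$. Since $\Gamma$ is a tree, $\beta=0$, so the preceding Theorem specializes to
\[
 \lambda_k\bigl(L^{\rm st,D}(\Gamma,B)\bigr)=\lambda_{k+|B|-1}\bigl(L^{\rm a/st,N}(\Gamma,B)\bigr),\qquad k\in\N .
\]

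The second ingredient is the inequality $\lambda_j(L^{\rm a/st,N}(\Gamma,B))\le\lambda_j(L^{\rm st,D}(\Gamma,\partial\Gamma\setminus B))$ for all $j\in\N$, which I would obtain by repeating the argument in the proof of Lemma~\ref{lem:innerTrafo}. Set $C:=\partial\Gamma\setminus B$ and choose $\phi_1,\dots,\phi_E$ and the unitary $U$ exactly as in that proof; this construction only involves the interior vertices of $\Gamma$ and is insensitive to the boundary conditions. Consider the closed, nonnegative quadratic form $f\mapsto\int_\Gamma|f'|^2\,\d x$ defined on those $f\in W^1_2(\Gamma\setminus\cV)$ for which $U^*f$ is continuous at every vertex and vanishes at the vertices of $C$; it represents $U\,L^{\rm st,D}(\Gamma,C)\,U^*$, which is unitarily equivalent to $L^{\rm st,D}(\Gamma,C)$. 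By \eqref{eq:sumZero}, continuity of $U^*f$ at an interior vertex forces the anti-standard balance condition on $f$ there; at the vertices of $C$ one has Dirichlet conditions on $f$ (preserved by $U$ since it acts edgewise by a factor of modulus one), and at the vertices of $\partial\Gamma\setminus C=B$ no condition on $f$ is imposed. Hence this form domain is contained in the form domain of $L^{\rm a/st,N}(\Gamma,B)$, the two forms coincide on it, and the min--max principle yields the asserted inequality. Combining the two displayed relations with $j=k+|B|-1$ gives \eqref{eq:tree}.

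The argument involves no real analytic difficulty; the points that demand care are the index bookkeeping --- the shift $|B|-1$ is the kernel dimension $\beta+|B|-1$ of $L^{\rm a/st,N}(\Gamma,B)$, which equals $|B|-1$ on a tree with $B\ne\emptyset$ --- and the correct matching of vertex conditions under the transform $U$: it sends the Laplacian with Dirichlet set $C$ to the anti-standard Laplacian whose Neumann set is $\partial\Gamma\setminus C$, so that the choice $C=\partial\Gamma\setminus B$ is exactly what lands on $L^{\rm a/st,N}(\Gamma,B)$. I expect this bookkeeping, rather than any analytic estimate, to be the only thing one has to be careful about.
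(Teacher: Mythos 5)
Your argument is correct and is exactly the route the paper takes: specialize the mixed-boundary theorem to $\beta=0$ and combine it with the mixed-boundary version of Lemma~\ref{lem:innerTrafo} via the unitary $U$, whose construction only uses the interior vertices of the tree. Your index bookkeeping and the matching of vertex conditions (Dirichlet set $C=\partial\Gamma\setminus B$ transforming onto $L^{\rm a/st,N}(\Gamma,B)$) are precisely what the paper's brief remark before the corollary intends, with the degenerate case $B=\emptyset$ handled sensibly by Corollary~\ref{cor:Friedlander}.
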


\subsection*{Scaling-invariant Laplacians} 

Standard and anti-standard conditions are special cases of scaling-invariant (or non-Robin) vertex conditions. At a vertex of degree $d$ such conditions are characterized by two mutually orthogonal subspaces $ X^\pm (v) $ that span $ \mathbb C^{d} $, and the vertex conditions are then given by
\begin{align} 
\label{scinvvc}
\begin{cases}
 \vec{f} (v) \in X^+ (v), & \\
 \partial \vec{f}(v)  \in X^-(v), &
\end{cases}
\end{align}
where $\vec{f} (v) $ is the vector containing the values $f (x_j)$ for all $x_j \in v$ and $\partial \vec{f} (v) $ is the vector containing all the $\partial f(x_j)$, where the endpoints $x_j$ are enumerated in the same order.
For $X^+ (v) = \spann \{(1, \dots, 1)^\top\}$ and $X^- (v) = X^+ (v)^\perp$ we have standard conditions, and the converse choice leads to anti-standard conditions. Any scaling-invariant Laplacian $L$ is then defined on all functions in $ W_2^2 (\Gamma \setminus V) $ satisfying the conditions~\eqref{scinvvc}.
Under these conditions, stretching all the edges in $ \Gamma $ and keeping the same vertex conditions
will lead to a Laplace operator whose eigenvalues are multiples of the eigenvalues for the original problem, hence the name scaling-invariant.

In the same way as for the operators in Section~\ref{sec:dual} every scaling-invariant Laplacian $L$ possesses a decomposition into a product of two mutually adjoint first order operators, 
\begin{equation} \label{prod}
 L = D^* D ,
 \end{equation}
where $ D = \frac{1}{i} \frac{d}{dx} $ is defined on all functions $f$ in $W^1_2 (\Gamma \setminus \cV)$ such that $\vec f (v) \in X^+ (v)$ is satisfied at each vertex. If the graph $ \Gamma $ is bipartite, then the domain of the adjoint operator  $ D^* $ is determined by the conditions
$$ \vec{f} (v) \in X^-(v) $$
at each vertex. Hence the domain of the dual operator $ \hat{L} := D D^* $ is given by just interchanging the roles of the subspaces $ X^+(v) $ and $ X^-(v) $ at each vertex. Thus a similar analysis as in Section~\ref{sec:dual} can be carried out. The only difficulty is to determine the dimensions of the kernels of $L$ and $\hat L$.

If $ \Gamma $ is not bipartite, then still every scaling-invariant Laplacian possesses a decomposition \eqref{prod}, but the domain of the dual operator $ \hat{L} $ depends on the orientation of the edges and is not obtained by just interchanging the subspaces $X^+ (v)$ and $X^- (v)$.

\subsection*{Topological perturbations of bipartite graphs}

Standard and anti-standard conditions lead to a different behaviour of the eigenvalues as one of the vertices is chopped into two pieces, i.e.\ dividing the corresponding equivalence class into two or more and thus turning $\Gamma$ into a new graph $\Gamma'$: 
\begin{itemize}
 \item the eigenvalues of $L^{\rm st} (\Gamma)$ are non-increasing, $\lambda_k (L^{\rm st} (\Gamma)) \geq \lambda_k (L^{\rm st} (\Gamma'))$ for all $k \in \N$, since the domain of the quadratic form is increasing;
 \item the eigenvalues of $L^{\rm a/st} (\Gamma)$ are non-decreasing, $\lambda_k (L^{\rm a/st} (\Gamma)) \leq \lambda_k (L^{\rm a/st} (\Gamma'))$ for all $k \in \N$, since  the domain of the quadratic form is decreasing,
\end{itemize} 
see, e.g.,~\cite{KMN13,RS19}. On the other hand, as we have shown $L^{\rm st} (\Gamma)$ and $L^{\rm a/st} (\Gamma)$ have the same spectra outside the origin, and the same holds for $L^{\rm st} (\Gamma')$ and $L^{\rm a/st} (\Gamma')$.

The relation between the eigenvalues on the chopped graph $\Gamma'$ depends on whether the cut leads to 
a graph with two connected components or just opens one of the cycles.  In either case we have the relation
$$ \lambda_{m+1} \big( L^{\rm st} (\Gamma') \big) = \lambda_{m + \beta - 1} \big( L^{\rm a/st} (\Gamma') \big) , \qquad m \in \N,$$
but for different reasons:
\begin{itemize}
 \item if $ \Gamma' $ is connected, then the multiplicity of the eigenvalue zero for the anti-standard Laplacian decreases by $ 1 $ since $ \beta' = \beta -1 ;$
 \item if $ \Gamma' $ is not connected, then the multiplicity of the eigenvalue zero for the standard Laplacian increase by $1$ since $ \Gamma' $ consists of two components.
\end{itemize}

\section{Inequalities between standard and Dirichlet eigenvalues}\label{sec:Friedlander}

This section is devoted to comparison principles of the type of Corollary~\ref{cor:Friedlander}. It will turn out that among equilateral quantum graphs bipartite graphs are characterized by the validity of such an inequality. Moreover, we consider non-equilateral cases. In contrast to the situation in Corollary~\ref{cor:Friedlander}, in this section we focus on the comparison between the eigenvalues of the standard Laplacian $L^{\rm st} (\Gamma) $ and the {\em Dirichlet Laplacian} $L^{\rm D} (\Gamma)$ that acts as $L^{\rm D} (\Gamma) f = \cL f$ and is defined on all functions $f \in  W^2_2 (\Gamma\setminus \cV)$ such that 
\begin{align*}
 f (v) = 0 \quad \text{for all}~v \in \cV.
\end{align*}
These vertex conditions separate the graph effectively into intervals and its spectrum consists of the points
\begin{align*}
 \frac{m^2 \pi^2}{L (e_n)^2}, \quad m \in \N, n = 1, \ldots, E,
\end{align*}
where $ L(e_n) $ is the length of the edge $ e_n. $ That is, the spectrum is determined by the edge lengths and is independent of the connectivity of $\Gamma$. It is clear from variational principles that we have the two trivial inequalities 
\begin{align*}
 \lambda_n \big(L^{\rm st} (\Gamma)\big) \leq \lambda_n \big(L^{\rm D} (\Gamma) \big)
\end{align*}
and
\begin{align}\label{eq:ganzTrivial}
 \lambda_n \big(L^{\rm a/st} (\Gamma)\big) \leq \lambda_n \big(L^{\rm D} (\Gamma) \big)
\end{align}
for all $n \in \N$. Using the second estimate and Corollary~\ref{cor:tree}, for any tree we obtain
\begin{align}\label{eq:forTrees}
 \lambda_{n + 1} (L^{\rm st}(\Gamma)) \leq \lambda_n \big(L^{\rm D} (\Gamma)  \big)
\end{align}
for all $n \in \N$, which is another simple proof of~\cite[Theorem~4.1]{R17}. Note that for bipartite graphs with $ \beta > 0 $ the combination of the second inequality with Theorem~\ref{thm:mainAst} does not improve the estimate for the standard Laplacian. The estimate~\eqref{eq:forTrees} is not true in general on a metric graph $\Gamma$. To provide one of the simplest counterexamples, consider the loop graph constisting of one edge $[x_1, x_2]$ and one vertex $v = \{x_1, x_2\}$. In this case we have $\lambda_2 (L^{\rm st} (\Gamma)) = \frac{4 \pi^2}{(x_2 - x_1)^2}$ but $\lambda_1 (L^{\rm D} (\Gamma)) = \frac{\pi^2}{(x_2 - x_1)^2}$, that is,~\eqref{eq:forTrees} is violated for $n = 1$.  In this context we also refer the reader to the recent observation~\cite{F19}.

We would like to point out that the estimate~\eqref{eq:forTrees} cannot be improved, since for the equilateral star graph $S$ with three edges and edge lengths $1$ we have
\begin{align*}
 & \lambda_n \big( L^{\rm st} (S) \big) = 0, \Big( \frac{\pi}{2} \Big)^2,  \Big( \frac{\pi}{2} \Big)^2,   \pi ^2, \Big( \frac{3 \pi}{2} \Big)^2, \dots,  \\
 & \lambda_n \big( L^{\rm D} (S) \big) = \pi^2, \pi^2, \pi^2, (2 \pi)^2, \dots,
\end{align*}
implying $\lambda_{4} (L^{\rm st}(S)) = \lambda_3 (L^{\rm D} (S) )$ and $\lambda_5 (L^{\rm st} (S)) > \lambda_3 (L^{\rm D} (S))$. The eigenvalues depend continuously on the edge lengths, hence considering any three-star graph with almost equal but rationally independent edge lengths
we obtain a counterexample telling that estimate \eqref{eq:forTrees} cannot be improved by imposing any extra conditions like rational independence of the edge lengths. For the same reason, the equivalent inequality~\eqref{eq:ganzTrivial} cannot be improved either by imposing such conditions. In particular, the estimate claimed in~\cite[Theorem~5]{ZS19} seems to be false.

\subsection*{Equilateral graphs} For equilateral graphs the following theorem gives a complete answer to the question for which $n$ the inequality~\eqref{eq:forTrees} is satisfied. It leads back to the notion of bipartiteness.

\begin{theorem}\label{thm:malSehen}
Let the finite, compact, connected metric graph $\Gamma$ be equilateral. Then the inequality
\begin{align}\label{eq:Friedlander}
 \lambda_{n + 1} \big( L^{\rm st}(\Gamma) \big) \leq \lambda_n \big(L^{\rm D} (\Gamma) \big)
\end{align}
holds for all $n \in \N$ if and only if $\Gamma$ is bipartite. Moreover, if $\Gamma$ is not bipartite then \eqref{eq:Friedlander} is violated for
all $ n = (2m+1) E, \; m =0,1,2, \dots$, but holds for all other $ n. $ 
\end{theorem}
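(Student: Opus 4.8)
The plan is to exploit the well-known correspondence between the spectrum of the standard Laplacian on an equilateral graph and the spectrum of the normalized discrete Laplacian on the underlying combinatorial graph. Concretely, set all edge lengths equal to $1$. Then $\lambda = k^2$ (with $k > 0$, $k \notin \pi \N$ to avoid Dirichlet-type resonances first, and treating the resonant values separately) is an eigenvalue of $L^{\rm st}(\Gamma)$ precisely when $\cos k$ is an eigenvalue of the transition operator (the normalized adjacency operator) $P$ on $\ell^2(\cV)$, with the same multiplicity; this is the standard Cattaneo/von Below-type correspondence. The values $k \in \pi\N$ correspond to the ``Dirichlet'' part of the spectrum: each $k = m\pi$ contributes a known, large multiplicity to $L^{\rm st}(\Gamma)$ (essentially $E - V + \dim\ker(P \mp \mathrm{Id})$, the precise bookkeeping being part of the correspondence), and these are exactly the eigenvalues $\lambda_n(L^{\rm D}(\Gamma)) = m^2\pi^2$ appearing on the right-hand side of~\eqref{eq:Friedlander}. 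So the first step is to write down carefully the eigenvalue-counting function of $L^{\rm st}(\Gamma)$ in terms of the spectral data of $P$ on $[-1,1]$ together with the multiplicities sitting at the resonant levels.

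The second step is to translate the desired inequality $\lambda_{n+1}(L^{\rm st}(\Gamma)) \le \lambda_n(L^{\rm D}(\Gamma))$ into a statement about where the standard eigenvalues lie relative to the grid $\{m^2\pi^2\}$. Fix $m \in \N$. The right-hand side $\lambda_n(L^{\rm D}(\Gamma))$ equals $m^2\pi^2$ for $n$ in a block of length $E$ (since each of the $E$ edges contributes one Dirichlet eigenvalue $m^2\pi^2$), namely for $(m-1)E < n \le mE$. The inequality for all such $n$ is then equivalent to the statement that the number of eigenvalues of $L^{\rm st}(\Gamma)$ that are $\le m^2\pi^2$ is at least $mE + 1$; and failure at the top of the block, $n = mE$, is equivalent to that count being exactly $mE$. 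Using the spectral correspondence, the number of standard eigenvalues in $(( (m-1)\pi)^2, (m\pi)^2)$ equals the number of eigenvalues of $P$ in $(-1,1)$ counted with multiplicity (which is $V$ minus the multiplicities of $P$ at $\pm 1$, suitably distributed with the parity of $m$), while the multiplicity of $L^{\rm st}(\Gamma)$ exactly at the level $(m\pi)^2$ is $E - V + 1 + \dim\ker(P - (-1)^m\,\mathrm{Id})$ — these are the formulas one reads off from the resolvent/scattering analysis. Adding up the contributions up to and including level $(m\pi)^2$, the count telescopes and one finds that the number of standard eigenvalues $\le (m\pi)^2$ equals $mE + \dim\ker(P - \mathrm{Id}) + (\text{correction depending on parity of }m)$, where $\dim\ker(P-\mathrm{Id}) = 1$ always (constants), and the parity correction involves $\dim\ker(P + \mathrm{Id})$.

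The third and decisive step is the graph-theoretic input: for the normalized adjacency operator $P$ of a connected graph, $-1$ is an eigenvalue of $P$ if and only if the graph is bipartite, and in that case it is simple. This is the spectral characterization of bipartiteness. Feeding this into the count from step two: when $\Gamma$ is bipartite, the extra $\dim\ker(P+\mathrm{Id}) = 1$ pushes the running total so that the number of standard eigenvalues $\le (m\pi)^2$ is $mE+1$ for every $m$, whence~\eqref{eq:Friedlander} holds throughout the block and, one checks, also at the non-resonant values of $n$ by the same counting. When $\Gamma$ is not bipartite, $-1 \notin \sigma(P)$, the total drops to exactly $mE$ at the odd resonant levels; a small parity bookkeeping (the role of $+1$ versus $-1$ swaps with the parity of $m$, and $+1$ is always an eigenvalue) shows the deficit occurs precisely at $n = mE$ with $m$ odd — equivalently at $n = (2m+1)E$ — and that at all other $n$ the count is still $\ge n+1$, giving the inequality there. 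So the ``only if'' direction and the precise description of the failure set both come out of the same computation.

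The main obstacle I anticipate is getting the multiplicity bookkeeping at the resonant levels $k = m\pi$ exactly right, including the interaction between the ``topological'' contribution $E - V + 1 = \beta$, the eigenspaces of $P$ at $\pm 1$, and the parity of $m$ (which governs whether a standard eigenfunction at level $m\pi$ sees $P - \mathrm{Id}$ or $P + \mathrm{Id}$). One must also handle the edge case $m$ even versus odd symmetrically and make sure the non-resonant $n$ never produce a violation — this requires knowing that between consecutive resonant levels the standard eigenvalues and the $P$-eigenvalues in $(-1,1)$ are in exact correspondence with no loss. Once the counting identity ``$\#\{\lambda_j(L^{\rm st}(\Gamma)) \le (m\pi)^2\} = mE + 1$ for all $m$ $\iff$ $\Gamma$ bipartite, with the failure pinned to $n=(2m+1)E$ otherwise'' is established, the theorem follows immediately, since $\lambda_n(L^{\rm D}(\Gamma))$ is constant equal to $(m\pi)^2$ exactly on the block $(m-1)E < n \le mE$.
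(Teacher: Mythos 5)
Your overall strategy is exactly the paper's: reduce to the von Below correspondence between $\sigma(L^{\rm st}(\Gamma))$ and the normalized discrete operator, use the fact that $-1\in\sigma(P)$ (equivalently $2\in\sigma(L_{\rm norm})$) if and only if the graph is bipartite, and then count eigenvalues block by block against the Dirichlet levels $(m\pi)^2$, each of multiplicity $E$. The reduction of \eqref{eq:Friedlander} to the counting statement ``$\#\{\lambda_j(L^{\rm st}(\Gamma))\le (m\pi)^2\}\ge mE+1$'' is also correct.

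However, there is a genuine gap, and it sits precisely at the point you yourself flag as ``bookkeeping'': the multiplicity of $L^{\rm st}(\Gamma)$ at the resonant levels $(m\pi)^2$. You state it as $E-V+1+\dim\ker\bigl(P-(-1)^m\,\mathrm{Id}\bigr)$, i.e.\ $\beta+\dim\ker(\cdot)$. The correct value is $E-V+2\dim\ker\bigl(P-(-1)^m\,\mathrm{Id}\bigr)$: for $m$ even it is $\beta+1$, for $m$ odd it is $\beta+1$ if $\Gamma$ is bipartite but $\beta-1$ (not $\beta$) if it is not. Your formula agrees with the truth only when the kernel is one-dimensional, so it is off by one exactly in the non-bipartite, odd-$m$ case — the only case the ``only if'' direction and the failure set $n=(2m+1)E$ depend on. Indeed, with your formula the count of standard eigenvalues up to $(m\pi)^2$ comes out as $1+(V-1-b)+m\cdot\bigl(\text{your multiplicity}\bigr)+\dots = mE+1$ for \emph{every} connected graph (already at $m=1$: $1+(V-1)+\beta=E+1$ in the non-bipartite case), so the inequality would never fail and the theorem's second half could not be recovered; your asserted ``drop to $mE$ at odd levels'' does not follow from the formulas you wrote down. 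Closing this gap requires the actual eigenfunction analysis at $k=m\pi$: eigenfunctions vanishing at all vertices are built from sine functions along cycles with an even number of edges, giving $\beta$ independent ones when $\Gamma$ is bipartite, whereas for a non-bipartite graph an odd cycle supports no such eigenfunction by itself and two odd cycles joined by a path support exactly one, giving only $\beta-1$; together with the presence or absence of the vertex-nonvanishing eigenfunction coming from $\mu=2$ this yields the multiplicities $\beta+1$ versus $\beta-1$. This is the core of the paper's proof (its Case 3), not a routine correspondence fact, and without it your counting identity is unproved (and, as written, false in the decisive case).
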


\begin{proof}
Assume without loss of generality that each edge has length one. Then the spectrum of $L^{\rm D} (\Gamma)$ is
given by the eigenvalues $ (\pi m )^2$, $m = 1,2, \dots$, where each eigenvalue has multiplicity $E$. Hence the first $2 E$ eigenvalues of $L^{\rm D} (\Gamma)$ are given by
\begin{align} \label{LD}
\begin{split}
 & \lambda_1 \big(L^{\rm D} (\Gamma) \big) = \lambda_2 \big( L^{\rm D} (\Gamma) \big) = \dots = \lambda_E \big(L^{\rm D} (\Gamma) \big) = \pi^2; \\
 & \lambda_{E + 1} \big(L^{\rm D} (\Gamma) \big) = \lambda_{E + 2} \big(L^{\rm D} (\Gamma) \big) = \dots = \lambda_{2 E} \big(L^{\rm D} (\Gamma) \big) = 4 \pi^2.
\end{split}
\end{align}

To determine the spectrum of the standard Laplacian $L^{\rm st} (\Gamma)$ one may use the formula connecting it to the spectrum of the
normalised  Laplacian $ L_{\rm norm} (G)$ on the discrete graph $G$ having the same set of vertices and edges as $\Gamma$, defined as
\begin{align*}
 \big( L_{\rm norm} (G) \psi \big) (v) = \psi (v) - \frac{1}{\sqrt{{\rm deg}\; v}} \sum_{w \sim v}   \frac{1}{\sqrt{{\rm deg}\; w}} \psi (w),
\end{align*}
where $v \sim w$ means that the vertex $w$ is adjacent to $v$.\footnote{In the case of parallel edges and loops the summaton should be taken over all edges connecting vertices. In this way parallel edges are counted in accordance with their multiplicity.  Loops contribute twice to both summation and calculation of the vertex degree.}  The spectrum of $L_{\rm norm} (G)$ consists of $V$ eigenvalues $0 = \mu_1 < \mu_2 \leq \dots \leq \mu_V \leq 2$. They are related to the eigenvalues of $L^{\rm st} (\Gamma)$ corresponding to eigenfunctions that do not vanish at all vertices simultaneously: a number $k_j^2 > 0$ is an eigenvalue of $L^{\rm st} (\Gamma)$ with an eigenfunction that is not identically zero on all vertices if and only if 
\begin{align}\label{eq:discCont}
 1 - \cos k_j = \mu_n, \qquad n = 1, 2, \dots, V,
\end{align}
see~\cite{vB85}. To each eigenvalue of the normalised Laplacian on a discrete graph correspond infinitely many eigenvalues of the standard Laplacian on the metric graph, but inside the interval $ (0, \pi^2) $ the correspondence to the eigenvalues of $L_{\rm norm} (G)$ in $(0, 2)$ is one-to-one including multiplicities. Note that eigenfunctions of $L^{\rm st} (\Gamma)$ that vanish simultaneously on all vertices may only appear at eigenvalues of the form $(n \pi)^2$ with $n \in \N$.

We will also make use of the fact that $\mu = 2$ is an eigenvalue of $L_{\rm norm} (G)$ if and only if the graph $G$ is bipartite, see, e.g.,~\cite[Lemma 1.7]{Chung}. For bipartite graphs the  multiplicity of the eigenvalue $2$ is one and the corresponding eigenfunction is equal to $1$ on $\cV_1$ and $- 1$ on $\cV_2$ if $\cV_1$, $\cV_2$ form a partition of $\cV$ such that each edge connects a vertex in $\cV_1$ with a vertex in $\cV_2$. 

Let us discuss the eigenvalues of the standard Laplacian on the interval $ [0, 4 \pi^2]. $
We are going to show that there are totally $ 2 E + 1 $ eigenvalues (which can also be seen from the  Weyl asymptotics). It will be convenient to separately discuss the eigenvalues inside the open intervals $ (0, \pi^2) $ and $ (\pi^2, 4 \pi^2) $ and  the points $ 0, \pi^2, 4 \pi^2 .$

{\bf Case 1.} Consider the eigenvalues of $L^{\rm st} (\Gamma)$ inside $(0, \pi^2) \cup (\pi^2, 4 \pi^2)$. To every eigenvalue $\mu$ of $L_{\rm norm} (G)$ that lies in $(0, 2)$ there correspond precisely two numbers $k \in (0, 2 \pi)$ satisfying~\eqref{eq:discCont} situated symmetrically with respect to the middle point $\pi$ of $(0, 2 \pi)$, and its squares $k^2$ are eigenvalues of $L^{\rm st} (\Gamma)$. Moreover, among the $V$ eigenvalues of $L_{\rm norm} (G)$, $0$ is an eigenvalue of multiplicity one and, according to the above remark, $2$ is either an eigenvalue of multiplicity one (if $G$ is bipartite) or no eigenvalue. Accordingly, $L^{\rm st} (\Gamma)$ has
\begin{itemize}
\item $V - 1$ pairs of eigenvalues in $(0, \pi^2) \cup (\pi^2, 4 \pi^2)$ if $G$ is non-bipartite, 
\item $V - 2$ pairs of eigenvalues in $(0, \pi^2) \cup (\pi^2, 4 \pi^2)$ if $G$ is bipartite;
\end{itemize}
for each pair, one of the values belongs to $(0, \pi^2)$ and one to $(\pi^2, 4 \pi^2)$.

{\bf Case 2.}  The point $ k^2 = 0 $ is always an eigenvalue of multiplicity one since the graph is connected.

{\bf Case 3.} The point $ k^2 = \pi^2 $ is an eigenvalue of multiplicity $ \beta \pm1 $ depending on whether $ G $ is bipartite or not. To determine the multiplicity we need to calculate the number of eigenfunctions corresponding to $ k^2 = \pi^2. $ If the graph is bipartite, then there is one eigenfunction not equal to zero at the vertices and given by $  \cos \pi (x-x_{2n-1}) $ on every edge $ e_n $, provided the edges are oriented so that their left end points belong to the same bipartite component of $G$. On the other hand, if the graph is not bipartite, then $ \mu = 2 $ is not an eigenvalue of $ L_{\rm norm} (G)$ and only eigenfunctions that vanish at all vertices may exist.

It remains to calculate the number of eigenfunctions which are equal to zero at all vertices. Every cycle with an even number of edges determines one such eigenfunction equal
to $ \pm \sin \pi (x-x_{2n-1}) $ on every edge $ e_n $ in the cycle and zero outside. If $ G $ is bipartite then all cycles have an even number of edges and we obtain $ \beta $
linearly independent eigenfunctions. There are no other such eigenfunction (see \cite{KuJFA}).

If the graph $ G $ is not bipartite then there exists at least one cycle formed by an odd number of edges. There is no eigenfunction supported only on such a cycle and
equal to zero at the vertices, but any two cycles with an odd number of edges determine precisely one eigenfunction equal to zero at the vertices: it is supported by the two cycles
and any single path connecting them. Therefore we have $ \beta -1 $ eigenfunctions equal to zero at the vertices in this case.

Summing up, the multiplicity of $ \lambda = \pi^2$ equals
\begin{itemize}
\item $ \beta - 1 = E - V$ if $G$ is non-bipartite;
\item $ \beta + 1 = E - V + 2$ if $G$ is bipartite.
\end{itemize}

{\bf Case 4.}  The point $k^2 = 4 \pi^2 $ is an eigenvalue of multiplicity $1 + \beta = E - V + 2$. Indeed, there is just one eigenfunction equal to $ 1 $ at all vertices (given by $ \cos 2 \pi (x-x_{2n-1})$ on every edge $e_n$) and $ \beta $ eigenfunctions  equal to zero at all vertices; each such eigenfunction is supported on one of the independent cycles in $\Gamma$ (given by $\sin 2 \pi (x-x_{2n-1})$ on each edge $ e_n $ in the cycle).

To sum up, there are precisely $2 E + 1$ eigenvalues of the standard Laplacian inside the interval $ [0, 4 \pi^2] $, and they satisfy the following:
\begin{itemize}
 \item  if the graph  \underline{is not} bipartite, then
\begin{align} \label{Lst1}
\begin{split}
 & 0 = \lambda_1 \big( L^{\rm st} (\Gamma) \big) < \dots \leq \lambda_V \big( L^{\rm st} (\Gamma) \big) < \lambda_{V + 1} \big( L^{\rm st} (\Gamma) \big) = \dots = \lambda_{E} \big(L^{\rm st} (\Gamma) \big) = \pi^2; \\
 & \pi < \lambda_{E + 1} \big(L^{\rm st} (\Gamma) \big) \leq \dots < \lambda_{E + V} \big( L^{\rm st} (\Gamma) \big) = \dots = \lambda_{2 E + 1} \big( L^{\rm st} (\Gamma) \big) = 4 \pi^2.
\end{split}
\end{align}
\item  if the graph \underline{is} bipartite, then
\begin{align} \label{Lst2}
\begin{split}
 & 0 = \lambda_1 \big( L^{\rm st} (\Gamma) \big) < \dots \leq \lambda_{V - 1} \big( L^{\rm st} (\Gamma) \big) < \lambda_{V} \big( L^{\rm st} (\Gamma) \big) = \dots = \lambda_{E + 1} \big( L^{\rm st} (\Gamma) \big) = \pi^2; \\
 & \pi < \lambda_{E + 2} \big( L^{\rm st} (\Gamma) \big) \leq \dots < \lambda_{E + V} \big( L^{\rm st} (\Gamma) \big) = \dots = \lambda_{2 E + 1} \big( L^{\rm st} (\Gamma) \big) = 4 \pi^2.
\end{split}
\end{align}
\end{itemize}
A comparison between \eqref{Lst1} and \eqref{LD} implies that the inequality \eqref{eq:Friedlander} holds for all $n \leq 2 E$ except for $n = E$  if $G$ is non-bipartite. From comparing \eqref{Lst2} with \eqref{LD} we conclude that the inequality \eqref{eq:Friedlander} holds for all $n \leq 2 E$  if $G$ is bipartite. 

In order to cover higher eigenvalues we observe that in $ k $-scale the spectrum in any interval $ (2\pi m, 2\pi (m+1)] $ is obtained by shifting the interval $ (0, 2 \pi ] $ to the right  due to the $2 \pi$-periodicity of the relation~\eqref{eq:discCont} for generic points $ k \neq \pi n$, $n \in \mathbb N$, and repeating our analysis for the special points $k = \pi n$, $n \in \mathbb N$. 
Hence the inequality \eqref{eq:Friedlander} holds for any $n$, provided the graph
is bipartite, and is violated exactly for $n = (2 m + 1) E$, provided the graph is non-bipartite.
\end{proof}

\subsection*{Non-equilateral graphs}

For more general, possibly non-equilateral graphs the validity of the inequality~\eqref{eq:Friedlander} is not related one-to-one to bipartiteness of $\Gamma$. Consider for instance the cycle graph consisting of two edges with lengths $1$ and $3$. Then $\Gamma$ is bipartite but $\lambda_2 (L^{\rm st} (\Gamma)) = \frac{\pi^2}{4} > \frac{\pi^2}{9} = \lambda_1 (L^{\rm D} (\Gamma))$.

However, one may give sufficient conditions in terms of the relation of the edge lengths within each cycle for the eigenvalue inequality~\eqref{eq:forTrees} to hold. This is done in the following theorem. There we say that $\Gamma$ contains only independent cycles if there is no edge in $\Gamma$ which is part of two different cycles. Moreover, for each cycle $C$ we write $\cE (C)$ for the set of edges which form the cycle. The length of any edge $e \in \cE$ is denoted here by $L (e)$.

\begin{theorem}\label{thm:gluing}
Assume that the finite, compact, connected metric graph $\Gamma$ contains only independent cycles. Moreover, for each cycle $C$ in $\Gamma$ assume that for each $\hat e \in \cE (C)$ there exist numbers $\nu_{\hat e} (e) \in \{-1, 1\}$, $e \in \cE (C)$, such that 
\begin{align}\label{eq:important}
 \frac{1}{L (\hat e)} \sum_{e \in C} \nu_{\hat e} (e) L (e) \in 2 \N.
\end{align}
Then 
\begin{align}\label{eq:Friedlander'}
 \lambda_{n + 1} \big(L^{\rm st}(\Gamma) \big) \leq \lambda_n \big(L^{\rm D} (\Gamma) \big)
\end{align}
holds for all $n \in \N$.
\end{theorem}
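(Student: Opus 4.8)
The plan is to pass, via the min--max principle, from the claimed inequality to a statement about eigenvalue counting functions, and then to exploit that a graph with only independent cycles is assembled from elementary pieces. Fix $n$, set $\Lambda:=\lambda_n(L^{\rm D}(\Gamma))$, and for a nonnegative self-adjoint operator $A$ with discrete spectrum write $\cN_A(\Lambda)$ for the number of its eigenvalues in $[0,\Lambda]$, counted with multiplicity. Since $\lambda_n(L^{\rm D}(\Gamma))=\Lambda$ forces $\cN_{L^{\rm D}(\Gamma)}(\Lambda)\ge n$, it suffices to prove $\cN_{L^{\rm st}(\Gamma)}(\Lambda)\ge\cN_{L^{\rm D}(\Gamma)}(\Lambda)+1$, which gives $\lambda_{n+1}(L^{\rm st}(\Gamma))\le\Lambda$. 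Note that $\Lambda=k^2$ with $k=m\pi/L(e^{*})$ for some edge $e^{*}$ and some $m\in\N$.

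The hypothesis on $\Gamma$ means precisely that each edge lies in at most one cycle, so the blocks of $\Gamma$ (its maximal subgraphs without a cut vertex) are single edges together with the $\beta$ cycles of $\Gamma$, and $\Gamma$ is obtained from the disjoint union $B_1\sqcup\dots\sqcup B_b$ of its blocks by $b-1$ successive identifications of pairs of vertices lying in different components. Merging two vertices of a metric graph into one is a codimension-one restriction of the form $f\mapsto\int_\Gamma|f'|^2\,\d x$ on the form domain of the standard Laplacian (the new constraint being that $f$ take a common limit at the merged endpoint classes), and it leaves $L^{\rm D}$ unchanged because the Dirichlet form already decouples over edges; cf.~\cite{KMN13,RS19}. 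Hence $\cN_{L^{\rm st}(\Sigma')}(\Lambda)\ge\cN_{L^{\rm st}(\Sigma)}(\Lambda)-1$ whenever $\Sigma'$ arises from $\Sigma$ by one such merge, so that
\begin{align*}
 \cN_{L^{\rm st}(\Gamma)}(\Lambda)\ \ge\ \sum_{i=1}^{b}\cN_{L^{\rm st}(B_i)}(\Lambda)-(b-1),\qquad \cN_{L^{\rm D}(\Gamma)}(\Lambda)=\sum_{i=1}^{b}\cN_{L^{\rm D}(B_i)}(\Lambda).
\end{align*}
Thus everything reduces to showing $\cN_{L^{\rm st}(B)}(\Lambda)\ge\cN_{L^{\rm D}(B)}(\Lambda)+1$ for every block $B$.

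If $B$ is a single edge of length $\ell$ this is the interlacing of the Neumann and Dirichlet spectra of $[0,\ell]$: both sides equal $\lfloor k\ell/\pi\rfloor$ up to the additive constant $1$. If $B$ is a cycle $C$ with edge lengths $\ell_1,\dots,\ell_r$ and total length $\mathcal L=\sum_j\ell_j$, then $L^{\rm st}(C)$ is the Laplacian on a loop of circumference $\mathcal L$, whence $\cN_{L^{\rm st}(C)}(\Lambda)=1+2\lfloor k\mathcal L/(2\pi)\rfloor$, whereas $\cN_{L^{\rm D}(C)}(\Lambda)=\sum_j\lfloor k\ell_j/\pi\rfloor$. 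With $t:=k/\pi$ the required estimate becomes the arithmetic inequality $2\lfloor \tfrac{t}{2}\sum_j\ell_j\rfloor\ge\sum_j\lfloor t\ell_j\rfloor$. Since $2\lfloor x/2\rfloor$ is $\lfloor x\rfloor$ or $\lfloor x\rfloor-1$, and $\sum_j\lfloor t\ell_j\rfloor\le\lfloor t\mathcal L\rfloor$ with equality exactly when $\sum_j\{t\ell_j\}<1$, this is equivalent to: \emph{if $\sum_j\{t\ell_j\}<1$ then $\sum_j\lfloor t\ell_j\rfloor$ is even.} Here hypothesis~\eqref{eq:important} enters: choosing a reference edge $\hat e\in\cE(C)$ with a sign vector $\nu_{\hat e}$ and $m_{\hat e}\in\N$ such that $\sum_j\nu_{\hat e}(e_j)\ell_j=2m_{\hat e}\ell_{\hat e}$, multiplying by $t$ and separating integer and fractional parts gives
\begin{align*}
 \sum_j\nu_{\hat e}(e_j)\lfloor t\ell_j\rfloor-2m_{\hat e}\lfloor t\ell_{\hat e}\rfloor\ =\ 2m_{\hat e}\{t\ell_{\hat e}\}-\sum_j\nu_{\hat e}(e_j)\{t\ell_j\};
\end{align*}
the left side is an integer congruent modulo $2$ to $\sum_j\lfloor t\ell_j\rfloor$, and if $\hat e$ can be chosen with $t\ell_{\hat e}\in\N$ (e.g.\ $\hat e=e^{*}$ when $e^{*}\in\cE(C)$) the right side becomes $-\sum_j\nu_{\hat e}(e_j)\{t\ell_j\}$, an integer of absolute value $<1$, hence $0$, so $\sum_j\lfloor t\ell_j\rfloor$ is even. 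Imposing~\eqref{eq:important} for \emph{every} $\hat e\in\cE(C)$ is what makes an admissible reference edge available for every $t$ that can occur.

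The main obstacle is exactly this last arithmetic point: verifying that~\eqref{eq:important} forces $\sum_j\lfloor t\ell_j\rfloor$ to be even whenever $\sum_j\{t\ell_j\}<1$ for an arbitrary such $t$ --- when no edge of $C$ happens to realise $\Lambda$, one must choose the reference edge $\hat e$, and the sign vector $\nu_{\hat e}$ when it is not unique, so as to keep $m_{\hat e}\{t\ell_{\hat e}\}$ controlled, which requires a closer look at the structure imposed by~\eqref{eq:important} (for $r=3$ it forces the longest edge to be the sum of the other two, after which the claim is immediate). The remaining ingredients --- the min--max reduction, the codimension-one surgery estimate, the identification of $L^{\rm st}(C)$ with a loop Laplacian, and the spectra of an interval and a loop --- are routine; for $\beta=0$ the statement is just~\eqref{eq:forTrees}.
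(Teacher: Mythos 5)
Your architecture is sound but genuinely different from the paper's. The paper proves the single-cycle case variationally, Filonov-style: for $\lambda=\lambda_n(L^{\rm D}(C))=m^2\pi^2/L(\hat e)^2$ it augments an $n$-dimensional Dirichlet test space by the explicit function $g=e^{i\sqrt{\lambda}\,(\text{signed arclength})}$, whose continuity around the cycle is exactly what \eqref{eq:important} for that particular $\hat e$ guarantees; general graphs are then assembled by gluing blocks at single vertices, using that $L^{\rm st}$ of the glued graph is a rank-one perturbation (in the resolvent sense) of the direct sum of the pieces. You instead compute the block spectra explicitly (interval and loop), reduce the cycle case to a floor-function parity inequality, and glue by codimension-one form surgery; the two gluing mechanisms are equivalent in substance, and your bookkeeping ($b-1$ vertex identifications, additivity of the Dirichlet counting function, blocks being single edges or cycles under the independent-cycles hypothesis) is correct.

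The only gap is the one you flag yourself, and it is not actually an obstacle: you are trying to prove more than you need. The per-block inequality $\cN_{L^{\rm st}(C)}(\Lambda)\ge\cN_{L^{\rm D}(C)}(\Lambda)+1$ at an arbitrary $\Lambda$ (in particular at a global Dirichlet eigenvalue realized by an edge outside $C$) follows from its validity at the jump points of $\cN_{L^{\rm D}(C)}$, i.e.\ at points of $\sigma(L^{\rm D}(C))$. Indeed, given $\Lambda$, either $\cN_{L^{\rm D}(C)}(\Lambda)=0$, in which case $0\in\sigma(L^{\rm st}(C))$ already gives the claim, or one takes the largest $\Lambda'\le\Lambda$ in $\sigma(L^{\rm D}(C))$ and uses monotonicity,
\begin{align*}
 \cN_{L^{\rm st}(C)}(\Lambda)\ \ge\ \cN_{L^{\rm st}(C)}(\Lambda')\ \ge\ \cN_{L^{\rm D}(C)}(\Lambda')+1\ =\ \cN_{L^{\rm D}(C)}(\Lambda)+1.
\end{align*}
At such $\Lambda'$ one has $tL(\hat e)\in\N$ for some $\hat e\in\cE(C)$, which is exactly the case your parity computation already settles (the right-hand side of your identity is an integer of modulus at most $\sum_j\{t\ell_j\}<1$, hence zero, so $\sum_j\lfloor t\ell_j\rfloor$ is even). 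So no closer analysis of \eqref{eq:important} for general $t$ is needed; with this one-line reduction your proof is complete. This is also, in different clothing, how the paper sidesteps the same issue: in its gluing step the cycle block is only ever tested against its own Dirichlet eigenvalues, with $\hat e$ chosen as the edge realizing them -- which is precisely why \eqref{eq:important} is assumed for every $\hat e\in\cE(C)$.
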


\begin{proof}
Let us first consider the case that $\Gamma$ consists only of one cycle, that is, all vertices in $\Gamma$ have degree two. Let $n \in \N$ be arbitrary. Then $\lambda := \lambda_n (L^{\rm D} (\Gamma)) = m^2 \pi^2 / L (\hat e)^2$ for some $\hat e \in \cE$ and $m \in \N$. By the min-max principle for $L^{\rm D} (\Gamma)$ we have
\begin{align*}
 \lambda = \min_{\substack{F \subset W^1_{2, 0} (\Gamma \setminus \cV) \\ \dim F = n}} \max_{\substack{f \in F \\ f \neq 0}} \frac{\int_\Gamma |f'|^2 d x}{\int_\Gamma |f|^2 d x},
\end{align*}
where $W^1_{2, 0} (\Gamma \setminus \cV)$ consists of all $f \in W^1_2 (\Gamma \setminus \cV)$ such that $f (v) = 0$ for all $v \in \cV$. Hence there exists an $n$-dimensional subspace $F$ of $W^1_{2, 0} (\Gamma \setminus \cV)$ such that
\begin{align}\label{eq:formMu}
 \int_\Gamma |f'|^2 d x \leq \lambda \int_\Gamma |f|^2 d x, \qquad f \in F.
\end{align}
Assume that a parametrization of the edges and an enumeration $e_1, \dots, e_{E}$ of $\cE$ is chosen along the orientation and order of the cycle. Define a function $g$ on $\Gamma$ by
\begin{align*}
 g (x) = e^{i \sqrt{\lambda} \left(\sum_{k = 1}^{j - 1} \nu_{\hat e} (e_k) L (e_k) + \nu_{\hat e} (e_j) (x - x_{2 j - 1}) \right)}, \qquad x \in e_j, \quad j = 1, \dots, E.
\end{align*}
Then $g \in W^2_2 (\Gamma \setminus \cV)$ and $g$ is continuous at each vertex since for $j = 1, \dots, E - 1$ we have
\begin{align*}
 g (x_{2 j}) = e^{i \sqrt{\lambda} \left(\sum_{k = 1}^j \nu_{\hat e} (e_k) L (e_k) \right)} = g (x_{2 j + 1})
\end{align*}
and 
\begin{align*}
 g (x_{2 E}) & = e^{i \sqrt{\lambda} \left(\sum_{k = 1}^{E} \nu_{\hat e} (e_k) L (e_k) \right)} = e^{i m \pi \frac{1}{L (\hat e)} \left(\sum_{k = 1}^{E} \nu_{\hat e} (e_k) L (e_k) \right)} = 1 = g (x_1)
\end{align*}
by \eqref{eq:important}. Note that $g$ satisfies
\begin{align}\label{eq:wichtig}
 g' (x) = i \sqrt{\lambda} \, \nu_{\hat e} (e_j) g (x) \quad \text{and} \quad - g'' (x) = \lambda g (x), \qquad x \in e_j, j = 1, \dots, E.
\end{align}

Let now $f \in F$ and $\eta \in \C$. Then the inequality~\eqref{eq:formMu}, integration by parts and~\eqref{eq:wichtig} yield
\begin{align}\label{eq:jetztGehtsLos}
\begin{split}
 \int_\Gamma |f' + \eta g'|^2 d x & = \int_\Gamma |f'|^2 d x + 2 \Real \int_\Gamma f' \overline{\eta g'} d x + \int_\Gamma |\eta g'|^2 d x \\
 & \leq \lambda \int_\Gamma |f|^2 d x + 2 \Real \int_\Gamma f \overline{\eta (- g'')} d x + \int_\Gamma |\eta g'|^2 d x \\
 & = \lambda \int_\Gamma |f|^2 d x + 2 \lambda \Real \int_\Gamma f \overline{\eta g} d x + \lambda \int_\Gamma |\eta g|^2 d x \\
 & = \lambda \int_\Gamma |f + \eta g|^2 d x,
\end{split}
\end{align}
where we have used that $f$ vanishes at each vertex. Note that $|g (x)| = 1$ for all $x \in \Gamma$ and, hence, $g \notin F$. Thus~\eqref{eq:jetztGehtsLos} implies the assertion~\eqref{eq:Friedlander} in the case that $\Gamma$ is a cycle.

Let now $\Gamma$ be an arbitrary compact, finite, connected graph having only independent cycles. Then $\Gamma$ can be obtained 
by gluing together successively cycle graphs having the property~\eqref{eq:important} and trees, where in each step the gluing may only take place
 at one fixed vertex. Using the statement for cycle graphs and the inequality~\eqref{eq:forTrees} for trees it suffices to show the following: If $\Gamma_1$, $\Gamma_2$ are any finite, compact graphs such that $\lambda_{n + 1} (L^{\rm st} (\Gamma_j)) \leq \lambda_n ( L^{\rm D}(\Gamma_j))$ holds for all $n \in \N$, $j = 1, 2$, then 
\begin{align}\label{eq:goal}
 \lambda_{n + 1} \big( L^{\rm st} (\Gamma_{1,2}) \big) \leq \lambda_n \big( L^{\rm D} (\Gamma_{1,2}) \big) \quad \text{for all}~n \in \N,
\end{align}
where $\Gamma_{1,2}$ is any graph obtained from choosing one vertex of $\Gamma_1$ and one vertex of $\Gamma_2$ and gluing together $\Gamma_1$ and $\Gamma_2$ at these vertices. The inequality~\eqref{eq:goal} follows from a perturbation argument. Indeed, fix $n \in \N$. Since the Dirichlet Laplacian on $\Gamma_{1,2}$ is the direct sum of the Dirichlet Laplacians on $\Gamma_1$ and $\Gamma_2$, there exist numbers $m, j \in \N$ such that $m + j \geq n$ and
\begin{align*}
 & \lambda_m \big(L^{\rm D}(\Gamma_1) \big) \leq \lambda_n \big(L^{\rm D} (\Gamma_{1,2}) \big) < \lambda_{m + 1} \big(L^{\rm D}(\Gamma_1) \big), \\
 & \lambda_j \big( L^{\rm D}(\Gamma_2) \big) \leq \lambda_n \big(L^{\rm D}(\Gamma_{1,2}) \big) < \lambda_{j + 1} \big(L^{\rm D}(\Gamma_2) \big).
\end{align*}

Then it follows from the assumption on the eigenvalues on $\Gamma_1$ and $\Gamma_2$ that $L^{\rm st}(\Gamma_1)$ has at least $m + 1$ eigenvalues 
in the interval $[0, \lambda_n (L^{\rm D}(\Gamma_{1,2}))]$ and $L^{\rm st} (\Gamma_2)$ has at least $j + 1$ eigenvalues in the interval 
$[0, \lambda_n (L^{\rm D}(\Gamma_{1,2}))]$. As the standard Laplacian on $\Gamma_{1,2}$ is a rank one perturbation in the resolvent sense of the direct 
sum of $L^{\rm st} (\Gamma_1)$ and $L^{\rm st} (\Gamma_2)$, it follows that $L^{\rm st} (\Gamma_{1,2})$ has at least $m + j + 1 \geq n + 1$ 
eigenvalues in $[0, \lambda_n (L^{\rm D} (\Gamma_{1,2}))]$. From this the inequality~\eqref{eq:goal} follows.
\end{proof}

\begin{remark}
If $\Gamma = C$ is a cycle graph and contains a pair of rationally independent edge lengths then the condition~\eqref{eq:important} implies $\sum_{e \in E} \nu_{\hat e} (e) L (e) = 0$ for each $\hat e$.
\end{remark}

The following example shows that the condition~\eqref{eq:important} is not necessary for the inequality~\eqref{eq:Friedlander'} to hold for all $k$.

\begin{example}
Consider the graph consisting of a cycle formed by four edges $e_1, \dots, e_4$ with lengths $L (e_1) = 5, L (e_2) = 3$ and $L (e_3) = L (e_4) = 2$. Then the condition~\eqref{eq:important} is not satisfied. Indeed, there is no possibility to choose numbers $\nu (e_j) \in \{-1, 1\}$, $j = 1, 2, 3, 4$, such that
\begin{align*}
 \frac{1}{5} \big(5 \nu (e_1) + 3 \nu (e_2) + 2 \nu (e_3) + 2 \nu (e_4) \big) \in 2 \N.
\end{align*}
On the other hand, one can check that the inequality~\eqref{eq:Friedlander'} is satisfied for all $n \in \N$.
\end{example}

Observe that in the condition~\eqref{eq:important} the numbers $\nu_{\hat e} (e)$ have to be chosen for each edge $\hat e$ in each cycle. The following corollary gives an easier sufficient condition for~\eqref{eq:important}.

\begin{corollary}
Let $\Gamma$ be a graph containing only independent cycles. Moreover, for each cycle $C$ of $\Gamma$ assume that there exist numbers $\nu (e) \in \{-1, 1\}$, $e \in \cE (C)$, with
\begin{align*}
 \sum_{e \in C} \nu (e) L (e) = 0.
\end{align*}
Then 
\begin{align*}
 \lambda_{n + 1} \big(L^{\rm st}(\Gamma) \big) \leq \lambda_n \big(L^{\rm D} (\Gamma) \big)
\end{align*}
holds for all $n \in \N$.
\end{corollary}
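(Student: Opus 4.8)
The plan is to derive this corollary as a direct specialization of Theorem~\ref{thm:gluing}. The corollary hypothesizes that for each cycle $C$ there exist signs $\nu(e) \in \{-1,1\}$, $e \in \cE(C)$, with $\sum_{e \in C} \nu(e) L(e) = 0$, and we must produce, for each fixed $\hat e \in \cE(C)$, signs $\nu_{\hat e}(e) \in \{-1,1\}$ satisfying the divisibility condition~\eqref{eq:important}, namely $\frac{1}{L(\hat e)} \sum_{e \in C} \nu_{\hat e}(e) L(e) \in 2\N$. The obvious candidate is simply $\nu_{\hat e}(e) := \nu(e)$ for all $e$, independently of $\hat e$. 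Then $\sum_{e \in C} \nu_{\hat e}(e) L(e) = 0$, hence $\frac{1}{L(\hat e)}\sum_{e \in C}\nu_{\hat e}(e) L(e) = 0$.

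The one subtlety is whether $0 \in 2\N$: the theorem as stated requires the quotient to lie in $2\N$, and depending on the convention for $\N$ (the paper's preamble sets $\N = \mathbb{N}$, and in the equilateral analysis $m$ ranges over $1,2,\dots$, suggesting $0 \notin \N$), the value $0$ might be excluded. I would handle this by inspecting the proof of Theorem~\ref{thm:gluing} rather than its statement: there, for a single cycle, one sets $\lambda = \lambda_n(L^{\rm D}(\Gamma)) = m^2\pi^2/L(\hat e)^2$, builds the function $g$ on the cycle, and needs $g(x_{2E}) = g(x_1)$, which amounts to $e^{im\pi \frac{1}{L(\hat e)}\sum_k \nu_{\hat e}(e_k)L(e_k)} = 1$, i.e.\ one needs $\frac{m}{L(\hat e)}\sum_k \nu_{\hat e}(e_k)L(e_k)$ to be an \emph{even integer}. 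With $\sum_k \nu_{\hat e}(e_k)L(e_k) = 0$ this is $0$, which is even, so $g$ is well-defined and continuous, and the quadratic-form argument~\eqref{eq:jetztGehtsLos} goes through verbatim. Thus the corollary's hypothesis is in fact strong enough to run the proof of Theorem~\ref{thm:gluing} directly; one need not even worry about the $0 \in 2\N$ convention, since what is genuinely used is "even integer" and $0$ qualifies.

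Concretely, the write-up would be: given the signs $\nu(e)$ from the hypothesis, set $\nu_{\hat e}(e) = \nu(e)$ for every $\hat e \in \cE(C)$ and every $e \in \cE(C)$. Then for each $\hat e$, $\sum_{e \in C}\nu_{\hat e}(e)L(e) = 0$, so condition~\eqref{eq:important} holds (interpreting the right-hand side as including $0$, which is all that is used in the proof of Theorem~\ref{thm:gluing}). Since $\Gamma$ contains only independent cycles by assumption, Theorem~\ref{thm:gluing} applies and yields $\lambda_{n+1}(L^{\rm st}(\Gamma)) \leq \lambda_n(L^{\rm D}(\Gamma))$ for all $n \in \N$.

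I do not anticipate a real obstacle here; this is a bookkeeping corollary. The only point requiring a moment's care is the remark above about whether the literal statement of~\eqref{eq:important} permits the value $0$, and the clean resolution is to appeal to the mechanism of the proof (where "even integer" is what matters) rather than to the formal statement. If one prefers to avoid that discussion entirely, an alternative is to note that a cycle satisfying $\sum_{e\in C}\nu(e)L(e)=0$ can be perturbed away in the trivial sense — but that is unnecessary; the direct reduction is cleanest.
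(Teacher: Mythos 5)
Your proposal is correct and is exactly the reduction the paper intends: the corollary is stated as an immediate specialization of Theorem~\ref{thm:gluing} (choose $\nu_{\hat e}(e)=\nu(e)$ for every $\hat e$, so the sum in~\eqref{eq:important} vanishes), and the paper offers no separate proof. Your observation about the $0\in 2\N$ convention is a fair catch, and your resolution --- that the continuity of $g$ in the proof of Theorem~\ref{thm:gluing} only requires the quotient to be an even integer, which $0$ is --- is the right way to handle it.
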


We give another example for the case of rationally dependent edge lengths.

\begin{example}
Let $\Gamma$ be a cycle graph with pairwise rationally dependent edge lengths. Then there exists $x \in \R$ such that $x L (e) \in \N$ holds for each edge $e$. We assume that $x$ is minimal with this property, i.e., $\gcd \{x L (e) : e \in \cE\} = 1$. Suppose $x L (G) = \sum_{e \in \cE} x L (e)$ is odd. Without loss of generality assume that $L (e)$, $e \in \cE$, are natural numbers with $\gcd \{L (e) : e \in \cE\} = 1$. Consider the cycle $\widetilde \Gamma$ obtained from $\Gamma$ by dividing each edge $e$ into $L (e)$ edges of length one. Then $\widetilde \Gamma$ is an equilateral cycle graph with an odd total number $\widetilde E$ of edges, and with the help of Theorem~\ref{thm:malSehen} it follows
\begin{align*}
 \lambda_{\widetilde E + 1} \big(L^{\rm st} (\Gamma) \big) = \lambda_{\widetilde E + 1} \big(L^{\rm st} (\widetilde \Gamma) \big) > \lambda_{\widetilde E} \big(L^{\rm D} (
 \widetilde \Gamma) \big) \geq \lambda_{\widetilde E} \big(L^{\rm D} (\Gamma) \big).
\end{align*}
Thus the inequality~\eqref{eq:Friedlander'} is violated for $n = \widetilde E$.
\end{example}

\section*{Acknowledgements} 
The work of PK was supported by the Swedish Research Council (VR) grant D0497301.
JR gratefully acknowledges financial support by the grant No.\ 2018-04560 of the Swedish Research Council (VR).



\end{document}